\newtheorem{theorem}{Theorem}[section]
\newtheorem{lemma}[theorem]{Lemma}
\newtheorem*{lemma*}{Lemma}
\newtheorem*{corollary*}{Corollary}
\theoremstyle{definition}
\newtheorem{definition}[theorem]{Definition}
\newtheorem{example}[theorem]{Example}
\newtheorem*{example*}{Example}
\newtheorem{remark}[theorem]{Remark}
\newtheorem*{remark*}{Remark}
\newcommand{\ZZ}{\mathbb{Z}}
\newcommand{\PP}{\mathbb{P}}
\newcommand{\QQ}{\mathbb{Q}}
\newcommand{\RR}{\mathbb{R}}
\DeclareMathOperator{\Ker}{Ker}
\DeclareMathOperator{\Sym}{Sym}
\newcommand{\calA}{\mathcal{A}}
\newcommand{\calD}{\mathcal{D}}
\newcommand{\wt}[1]{\widetilde{#1}}
\DeclareMathOperator{\Pic}{Pic}
\DeclareMathOperator{\Prin}{Prin}
\DeclareMathOperator{\trop}{trop}
\DeclareMathOperator{\val}{val}
\DeclareMathOperator{\Trop}{Trop}
\DeclareMathOperator{\Div}{Div}
\DeclareMathOperator{\Prym}{Prym}
\let\ddiv\relax
\DeclareMathOperator{\ddiv}{div}
\DeclareMathOperator{\Jac}{Jac}
\title{Chip-firing games, Jacobians, and Prym varieties}
  \author{Yoav Len}
  \address{Mathematical Institute, University of St Andrews, St Andrews KY16 9SS, UK}
  \email{\href{mailto:yoav.len@st-andrews.ac.uk}{yoav.len@st-andrews.ac.uk}}
\begin{document}

\begin{abstract} 
We present a self-contained introduction to the theory of chip-firing games on metric graphs,  as well as the more recent theory of tropical Prym varieties.
We briefly discuss the connection between these notions and their algebraic counterparts and suggest various avenues for future research. 



\end{abstract}

\maketitle

 \setcounter{tocdepth}{2}
 \tableofcontents

\newpage

\section{Introduction}
These notes originate from the  Cambridge summer school on combinatorial algebraic geometry in September 2022. They cover chip-firing games, tropical Jacobians, and tropical Prym varieties, with a light introduction to Brill--Noether theory. The prerequisites are basic knowledge in graph theory and familiarity with standard concepts such as groups, quotient spaces, topological spaces, and metric spaces. 
Background in algebraic geometry is not necessary, however,  familiarity with the topic would help motivate  some of the notions encountered in the course, as many of whom originate from algebraic geometry. 

The theory of tropical chip-firing games is the combinatorial version of the theory of divisors on algebraic curves. The role of the  curves and their divisors is played by graphs  and   configurations of playing-chips. 
In its original form, the game was played on discrete graphs. However, it is advantageous to pass to a continuous version of the game, played on metric graphs, as it better approximates the algebraic theory and is more suitable for  problems involving moduli spaces. Furthermore, while some  definitions may seem more complicated, the continuous version tends to produce nicer and more elegant results. That should not be too surprising when comparing with other areas of maths. 

The tropical theory is not merely an analogue of the algebraic theory. A process known as \emph{tropicalization} turns an algebraic curve into a graph whose combinatorial invariants encode various geometric properties of the curve. 
Most notably, Baker's specialization lemma states that the rank of divisors may only increase under this process \cite{Baker_specialization}, an observation at the heart of various recent developments 
in the geometry of curves
\cite{JensenRanganathan_BrillNoetherwithfixedgonality, FarkasJensenPayne_Kodaira, CLRW_PBN, AbreuPacini_ResolutionOfAJ}.

The latter half of these notes is dedicated to  Jacobians and Prym varieties, both of which are 
abelian groups that classify divisors on  curves or graphs. While the Jacobian classifies divisors on a single object, the Prym variety is  associated with a double cover and classify divisors that behave nicely with respect to the double cover.
The precise definition for graphs is given in  Section \ref{sec:Pryms} and the definition for algebraic curves is analogous. 

From the perspective of the curves themselves, the Prym variety is an invariant that provides additional data and another method for probing them. From a broader perspective, Prym varieties provide a fruitful source of abelian varieties that can be parameterized and examined by looking at curves. 
While Jacobians are very well understood, they only account for a $3g-3$ dimensional locus in the moduli space of abelian varieties of dimension $g$, which is of dimension $\binom{g+1}{2}$. Prym varieties account for a $3g$-dimensional locus, which is a vast improvement. Furthermore, the Jacobian locus is contained in the closure of the Prym locus so, in a sense, we can view Prym varieties as a generalization of Jacobians. 

The fact that "there are more Pryms than Jacobians" can be used to establish structural results on the moduli space of abelian varieties. 
For instance, in dimension $5$, the numbers $\binom{g+1}{2}$ and $3g$ coincide, so Prym varieties are full-dimensional in $\calA_5$. A close examination of the geometry of Prym varieties then leads to the conclusion that the space  $\calA_5$ is uniruled \cite[Theorem 3.3]{DonagiUnirationality}. 
On a different note,  the intermediate Jacobian of a smooth Fano 3-fold $X$ is a Prym variety \cite[Section 3.2]{FarkasPrym}, and $X$ is rational iff it is an actual Jacobian. 
Clemens and Griffiths showed that this Prym variety is never a Jacobian when $X$ is a smooth cubic threefold, thus proving that smooth cubic threefolds are non-rational \cite{ClemensGriffiths_3folds}. 
The fact that intermediate Jacobians are Prym varieties is also used by Sacca, Laza, and Voisin in  \cite{LSV_Kahler} to construct and compactify hyperk\"ahler manifolds.

These notes are organized as follows.  Section \ref{sec:metric}  introduces the theory of chip-firing games on metric graphs, as well as basic notions such as the Jacobian and reduced divisors. In Section \ref{sec:JacobianStructure} we delve deeper into the Jacobian and explore its structure. The section is recommended  even for readers familiar with metric chip-firing, as the approach taken later   when  discussing Prym varieties will mirror some of the techniques used in Section \ref{sec:JacobianStructure}. Section \ref{sec:Pryms} begins the study of Prym varieties, followed by a more meticulous study of their structure in Section \ref{sec:PrymStructure}. Some of the proofs throughout are left as guided exercises. 

Appendices \ref{sec:chipFiringExercises}, \ref{sec:rankExercises}, and \ref{sec:PrymExercises} are devoted to exercises. All but 
Exercise \ref{sec:chipFiringExercises}.\ref{exer:algebraicCurve} should be solvable using only the material encountered in these notes.  Exercise \ref{sec:chipFiringExercises}.\ref{exer:algebraicCurve} requires basic familiarity with divisors on algebraic curves.
Finally, Appendix \ref{sec:openProblems} includes a variety of open problems whose solution, I believe, could be publishable. Most of them should be  solvable using only the material of this course, although one cannot know for certain before actually trying to solve them. 

\subsection*{Acknowledgements}  I warmly thank Navid Nabijou and Luca Battistella for organizing the summer school and putting together a week full of wonderful mathematical interactions. I thank Violeta Lopez, Margarida Melo, Sam Payne, Thomas Saillez, Remy Smith, and Dmitry Zakharov for helpful comments on an older draft of these notes.
I also thank  the participants of the summer school for many engaging questions and discussions.

\section{Chip-firing on metric graphs}\label{sec:metric}

\subsection{Metric graphs}
The fundamental objects studied throughout these notes are metric graphs:   metric spaces obtained from discrete graphs by assigning a real positive length to each edge. When an edge has  length $\ell$, we can identify it with the interval of length $\ell$. 
If a metric graph $\Gamma$ was obtained from a discrete graph $G$, we say that $G$ is a \emph{model} for $\Gamma$. Note that our discrete graphs are allowed to have loops and multiple edges. Unless stated otherwise, we assume that our graphs are connected. 
The \emph{genus} of a graph is the number of independent cycles, or equivalently
\begin{equation}\label{eq:genus}
g(\Gamma) = g(G) = e-v+1,
\end{equation}
where $e$ and $v$ are the number of edges and vertices respectively, and $f$ is the number of connected components (note that the genus  formula does not require the graph to be connected). The genus does not depend on the choice of model or metric.

\begin{example}
The two metric graphs of genus $3$ shown in Figure \ref{fig:twoMetricGraphs} both have the same minimal model but different edge lengths. We can obtain additional models for them by considering arbitrary points in the interior of edges as vertices. 
\begin{figure}
\centering
\begin{subfigure}{.5\textwidth}
  \centering
  \includegraphics[width=.4\linewidth]{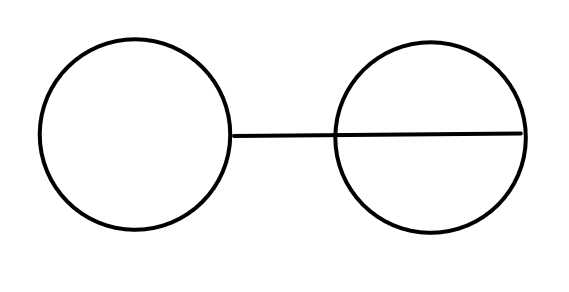}
\end{subfigure}%
\begin{subfigure}{.5\textwidth}
  \centering
  \includegraphics[width=.4\linewidth]{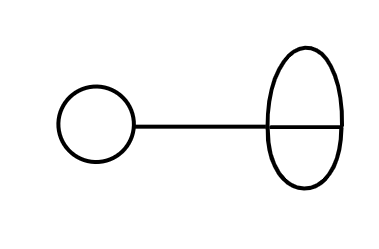}
\end{subfigure}
\caption{Two metric graphs having the same model}
\label{fig:twoMetricGraphs}
\end{figure}
\end{example}

If $g(\Gamma)\neq 1$, then $\Gamma$ has a unique minimal model $G_0$ which does not include any vertices of valency $2$. In the special case where $\Gamma$ is just a cycle, there are infinitely many minimal models consisting of a single vertex and a loop. 
We will often omit the model when clear from context. By abuse of notation,  segments of a metric graph $\Gamma$ may be referred to as edges and points as vertices. 

\subsection{Divisors and chip-firing}
Let $\Gamma$ be a metric graph. A \emph{divisor} on $\Gamma$ is a function $D:\Gamma\to\ZZ$ with finite support. We intuitively think of a divisor as assigning a finite number of playing chips at points  of the graph, where a negative number of chips is allowed. Negative chips are referred to as \emph{anti-chips}. The \emph{degree} of a divisor is the total number of chips, namely $\sum_{p\in\Gamma} D(p)$. A divisor is called \emph{effective} if it is non-negative at all points of the graph. 

\medskip

We stress that chips may be placed anywhere on the graph, not just the vertices.  

\begin{example}
Figure \ref{fig:divisor} shows a metric graph with a divisor of degree $-1$. We will often use full disks to  represent a  positive number of chips, whereas empty circles represent anti-chips. When the number of chips is not mentioned, a disk represents a single chip and an empty circle represents a single anti-chip.

\begin{figure}
    \centering
    \includegraphics[width=.2\linewidth]{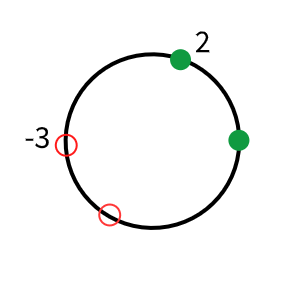}
    \caption{A divisor of degree $-1$ on a metric graph}
    \label{fig:divisor}
\end{figure}
\end{example}

The set of divisors forms an abelian group via addition, denoted $\Div(\Gamma)$.  There is a partial ordering on this set given by $D'\geq D$ whenever $D'(p)\geq D(p)$ at every point $p$. In particular, a divisor $D$ is effective if and only if $D\geq 0$. 
The subset of $\Div(\Gamma)$ consisting of divisors of degree $d$ is denoted $\Div^d(\Gamma)$. Note that $\Div^d(\Gamma)$ is a group when $d = 0$ and only a torsor of $\Div^0(\Gamma)$ otherwise. 
There is a non-canonical bijection between $\Div^d(\Gamma)$ and $\Div^0(\Gamma)$ given by $D\mapsto D - d\cdot p$, where $p$ is any point of $\Gamma$.

We will now define a certain equivalence relation on divisors that reflects the geometry of the graph (secretly, we also want it to mimic linear equivalence from algebraic geometry). Let 
\[
\varphi:\Gamma\to\RR
\]
be a continuous piecewise linear function with integer slopes. Then $\varphi$ induces a divisor $\ddiv(\varphi)$ where the value of  $\ddiv(\varphi)(p)$ at a point $p$ is the sum of incoming slopes of $\varphi$ at $p$. In other words, given a point $p$ consider all the intervals emanating from $p$. Since $\varphi$ is piecewise linear, those intervals can be chosen small enough so that $\varphi$ has a constant slope on each of them. We now take the sum of those slopes oriented towards $p$. 

\begin{example}\label{ex:divisors}
For the leftmost graph of Figure \ref{fig:piecewise_linear}, let
 $\varphi$ be the function that is constantly $0$ to the left of $u$, has slope $1$ on the segment between $u$ and $v$, and constant value $\varphi(v)$ for any $x$ to the right of $v$. Then $\ddiv(\varphi) = v - u$. 
 
 For the  graph in the middle of the figure, let $\psi$ be the function whose value is $0$ on the segment between $a$ and $b$, slope $1$ on the segment from $a$ to $d$ and on the segment from $b$ to $c$, and is constant on the segment between $c$ and $d$ (note that the function is continuous and well defined because the segments between $a$ and $d$ and between $b$ and $c$ have the same length). Then $\ddiv(\psi) = d + c - b - a$. 
 
 Finally, for the rightmost graph, let $\eta$ be the function whose  value is $0$ at $\delta$, has slope $1$ on the segments leading to the outer circle, and is constant  on the outer circle. Then $\ddiv(\eta) = \alpha + \beta + \gamma - 3\delta$.

\begin{figure}
    \centering
    \includegraphics[width=.4\linewidth]{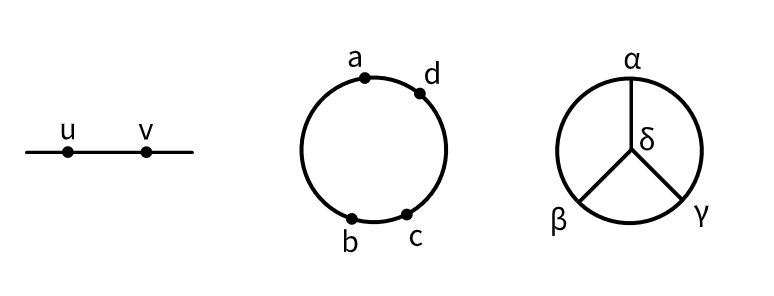}
     \caption{}
    \label{fig:piecewise_linear}
\end{figure}

\end{example}

A divisor of the form $\ddiv\varphi$ is called \emph{principal}. Divisors $D$ and $D'$ are said to be \emph{linearly equivalent}, denoted $D\simeq D'$, if $D-D' = \ddiv\varphi$ for some $\varphi$. The set of principal divisors is denoted $\Prin(\Gamma)$. Note that every principal divisor has degree $0$.  The \emph{linear system} of $D$, denoted $|D|$, is the set of effective divisor equivalent to it, namely $|D| = \{E\in\Div(\Gamma) \vert E\geq 0, E\simeq D\}$.

Modding out the group of divisors by linear equivalence gives rise to the \emph{Picard group} of the graph, namely,
\[
\Pic(\Gamma) = \Div(\Gamma)/\Prin(\Gamma).
\]
For any integer $d$, we have a group
$\Pic^d(\Gamma) = \Div^d(\Gamma)/\Prin(\Gamma)$ which classifies divisor classes of degree $d$. In the special case where $d=0$, the Picard group is known as the \emph{Jacobian} $\Jac(\Gamma)$. Note that for a fixed point $p$ and an integer $d$, we have 
$D-d\cdot p\simeq D'-d\cdot p$ whenever $D\simeq D'$. As a result, there is a bijection between the Jacobian and each Picard group $\Pic^d(\Gamma)$.

\begin{example}
Let $\Lambda$ be a line segment and let $D = a_1 v_1 + \cdots a_k v_k$ be a divisor of degree $0$, where the $v_i$ are distinct points arranged from left to right. Let $\varphi$ be the piecewise linear function whose slope is 0 to the left of $v_1$ and increases by $a_i$ at every point $v_i$. Then $\ddiv(\varphi) = D$ (the fact that $\deg(D)=0$ was used so the the slope of $\varphi$ is 0 to the right of $v_k$). 
In particular, each divisor of degree $0$ on this graph is principal and $\Jac(\Lambda)=\{0\}$.   Note that the only effective divisor equivalent to $D$ is the $0$ divisor, so the linear system of $D$ is $|D| = \{0\}$.

\end{example}

\subsection{How to actually compute linear equivalence}
As we saw in Example \ref{ex:divisors}, on a cycle graph, moving two chips in opposite directions at equal speed results in equivalent divisors. This phenomena can be generalized. Suppose that a  divisor $D'$  is obtained from a divisor $D$ by continuously moving  chips, as long as the following condition is maintained throughout the process:

\begin{equation}
  \tag{$\star$}\label{conditionA}
  \parbox{\dimexpr\linewidth-4em}{%
    \strut
    The total momentum of the chips moving  along each cycle of the graph is 0.
    \strut
  }
\end{equation}

\medskip 

\noindent Then $D$ and $D'$ are linearly equivalent. 

\begin{example}\label{ex:tree}
Suppose that $\Gamma$ has a bridge (for instance, $\Gamma$ could be the graph in Figure \ref{fig:twoMetricGraphs}) and $p$ and $p'$ are points  in its interior. Then $p\simeq p'$. Indeed, since there is no cycle containing either $p$ or $p'$,  condition \ref{conditionA} is vacuously true. 
Similarly, if $T$ is a tree, then any two divisors of the same degree are linearly equivalent. 
\end{example}

\begin{example}\label{ex:cycle}
Suppose that $\Gamma$ is the theta graph (also known as the binary graph of genus $2$) consisting of two vertices and three edges (see Figure \ref{fig:binary}), and suppose that $D$ is a divisor of degree 3 having   a single chip on each edge. Then moving each chip a small distance in the same direction results in an equivalent divisor. Indeed,  any cycle of the graph contains two of the chips, and their momentum cancels out since they are moving in opposite directions with respect to the cycle. 

On the other hand, if $D$ only consists of two chips then it doesn't move at all (that is, there is no other effective divisor equivalent to $D$). To see that, consider a cycle that contains one of the chips of $D$ but not the other. Then every time the chip is trying to move, the total momentum along this cycle is non-zero. 

\begin{figure}
    \centering
    \includegraphics[width=.3\linewidth]{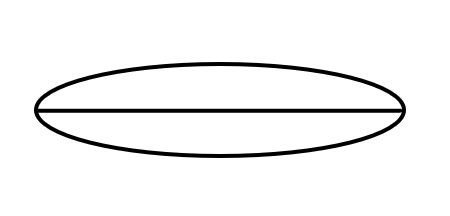}
    \caption{A binary graph of genus $2$}
    \label{fig:binary}
\end{figure}
\end{example}

\begin{example}
Let $\Gamma$ be the peace-sign graph seen on the right hand side of Figure \ref{fig:piecewise_linear}. Then $3\delta\simeq \alpha+\beta+\gamma$. Indeed moving  chips at equal speed along the edges emanating from $\delta$ satisfies Condition \ref{conditionA} and therefore results in an equivalent divisor.
\end{example}

\begin{example}\label{ex:circleGraph}
Suppose that $\Gamma$ is a cycle graph as in the middle of Figure \ref{fig:piecewise_linear}. Then, as in example \ref{ex:divisors}, given chips at $a$ and $b$, we can use condition \ref{conditionA} to move them at equal speed towards each other until they reach the points $c$ and $d$ respectively. Similarly, we can move two anti-chips towards each other so the divisor $-a-b$ is equivalent to $-c-d$. More generally, let $D = a_1 v_1 + \cdots a_k v_k$ be a divisor of degree $0$ (so some $a_i$s will inevitably be negative) and fix any point $p$ on $\Gamma$. Using condition \ref{conditionA}, we may move pairs of chips or anti-chips until one of them reaches $p$. Repeating this process, we can make sure that there is at most a single chip away from $p$. Since $\deg(D) = 0$, it follows that this new divisor (to whom $D$ is equivalent) is of the form $q-p$ where $q$ is some point of $\Gamma$. On the other hand, any divisor class of the form $q-p$ is in the Jacobian. It follows that there is a bijection between $\Jac(\Gamma)$ and $\RR/\ZZ$ obtained by mapping a divisor class $D$ of degree $0$ to the corresponding point $q$. 

\end{example}

\subsection{Reduced divisors and Dhar's burning algorithm}
The method discussed above is useful for some cases, but a more systematic approach is required in order to deal with trickier cases. Given a divisor of degree 0, we want a way to check whether it is equivalent to 0. As it turns out, this is easy to check for certain divisors known as \emph{reduced} divisors. Furthermore, every divisor is equivalent to a unique reduced divisor, which can be found using an algorithm called \emph{Dhar's burning algorithm}. 

Before defining reduced divisors, we need to discuss the notion of firing from a set. Let $A$ be a closed subset of $\Gamma$. For $\varepsilon>0$, let $A_\varepsilon$ be the set of points at distance strictly smaller than $\varepsilon$ from $A$ (in particular, $A\subseteq A_\varepsilon$). 
Choose $\varepsilon$ small enough so that $A_\varepsilon$ doesn't include any vertices not already in $A$.
Define a piecewise linear function $\theta$ that is constantly $0$ on $A$, constantly $\varepsilon$ on the complement of $A_{\varepsilon}$, and has slope 1 along $A_\varepsilon\setminus A$ (more precisely, $\theta(x) = \min(\varepsilon,\text{dist}(x,A))$). 
Now, the divisor $D_A:=\ddiv\theta$ 
is obtained by pushing a chip a small distance $\varepsilon$ away from $A$ at any edge emanating from $A$.  We refer to it as  \emph{chip-firing away from $A$}. 
Of course, $D_A$ depends on $\varepsilon$, but we usually omit it.

\begin{example}
If $A$ is the subgraph marked in blue in Figure \ref{fig:setBurning}, then $D_A = (a'-a) + (b'-b) + (c'-c) + (d'-d) + (e'-e) + (f'-f)$.

\begin{figure}
    \centering
    \includegraphics{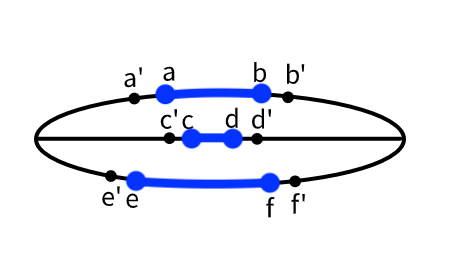}
    \caption{A metric graph with a marked subset}
    \label{fig:setBurning}
\end{figure}
\end{example}

Now fix a point $p$ of $\Gamma$. We say that a divisor $D$ is \emph{$p$-reduced} if:
\begin{enumerate}
    \item $D$ is effective away from $p$, namely $D(q)\geq 0$ for every $q\neq p$; and
    
    \item For every set $A$ such that  $p\notin A$, the divisor   $(D + D_A)(q)$ is not effective away from $p$. 
\end{enumerate}

Note that the definition does not depend at all on the value of $D$ at $p$. 
Intuitively, we think of a reduced divisor as the representative whose  chips are as close as possible to $p$. In particular, the following is true.

\begin{lemma}\label{lem:reducedInterpretation}
A divisor $D$ is equivalent to an effective divisor if and only if its $p$-reduced representative $D_p$ satisfies $D_p\geq 0$. Furthermore, if $E\simeq D$ is an effective divisor then $E(p)\leq D_p(p)$.
\end{lemma}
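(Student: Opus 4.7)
The first direction is immediate: since $D_p$ is itself linearly equivalent to $D$, the condition $D_p\geq 0$ exhibits $D_p$ as an effective representative. The plan is therefore to prove both the converse and the sharper bound $E(p)\leq D_p(p)$ in one move. Given any effective $E\simeq D_p$, I would write $E-D_p=\ddiv(\varphi)$ for some continuous piecewise linear $\varphi$ with integer slopes, and reduce everything to the single claim that \emph{$\varphi$ attains its global minimum at $p$}. Granting this, $p$ is a local minimum of $\varphi$, all outgoing slopes of $\varphi$ at $p$ are nonnegative, and so $\ddiv(\varphi)(p)\leq 0$; this immediately yields $E(p)\leq D_p(p)$, and combined with effectiveness of $D_p$ away from $p$ (part of the definition of $p$-reducedness) it gives $D_p\geq 0$ globally.

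To prove the claim I would argue by contradiction, setting $A=\varphi^{-1}(\min\varphi)$ and supposing $p\notin A$. Since $A$ is closed and avoids $p$, the $p$-reducedness of $D_p$ demands that firing $A$ spoil effectiveness away from $p$; my goal is to show the opposite, namely that $D_p+D_A$ remains effective away from $p$, which will contradict $p$-reducedness and therefore force $p\in A$. This check splits into three regions: the interior of $A$, the $\varepsilon$-offset locus $A_\varepsilon\setminus A$, and the boundary $\partial A$. The first two are essentially automatic: on the interior of $A$ the function $\varphi$ is locally constant, so $\ddiv\varphi$ vanishes and $D_p$ agrees with $E\geq 0$ there, while on $A_\varepsilon\setminus A$ the firing contributes $+1$ to $D_A$ and $D_p$ generically vanishes (choosing $\varepsilon$ small enough avoids the finite support of $D_p$).

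The main obstacle will be the boundary case $q\in\partial A$, where $D_A(q)$ equals minus the out-valence of $q$ from $A$ and we need $D_p(q)$ to cover that loss. The decisive input is the integrality of slopes: since $\varphi$ strictly exceeds $\min\varphi$ immediately outside $A$, every outgoing slope of $\varphi$ along an edge leaving $A$ at $q$ is a positive integer, hence at least $1$. This forces $\ddiv(\varphi)(q)$ to be at most $-(\text{out-valence of }q\text{ from }A)$, which rearranges to $D_p(q)\geq E(q)+\text{out-valence}$, providing exactly the reserve needed to absorb the firing at $q$. Once this local inequality is in place, the desired contradiction is complete, so $p\in A$ and the lemma follows.
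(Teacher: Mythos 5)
Your proof is correct and follows exactly the route the paper intends: the paper defers the proof to Exercise \ref{sec:chipFiringExercises}.\ref{exer:reduced}, whose hint is precisely to take $A=\varphi^{-1}(\min\varphi)$ and play it against the definition of $p$-reducedness. Your boundary analysis (integrality of slopes forcing $\ddiv(\varphi)(q)\leq -\mathrm{outval}_A(q)$, hence $D_p(q)$ covering the loss from firing $A$) fills in the details the exercise leaves to the reader, and the deduction of both the effectivity criterion and the bound $E(p)\leq D_p(p)$ from the single claim that $p\in A$ is exactly right.
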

\begin{proof}
The proof is given as part of Exercise \ref{sec:chipFiringExercises}.\ref{exer:reduced}.
\end{proof}

So finding a  $p$-reduced representative of a divisor immediately reveals whether the divisor is equivalent to effective. But how do we do that? This is where  the \emph{Dhar's burning algorithm} comes in. 
The algorithm consists of several iterations. After every iteration we obtain an equivalent divisor that is closer to being $p$-reduced. The algorithm terminates when the divisor is $p$-reduced, which happens after a finite number of iterations. The steps are as follows.

1. Start a ``fire" from $p$ which spreads along the graph. It doesn't matter how fast the fire is spreading along each edge. The chips of $D$ act as firemen. Each chip can stop a single fire. For instance, if a vertex has 3 chips and 3 incoming fires then the fire will stop. But if another fire comes along then the vertex will succumb and the fire will spread beyond the vertex. 

2. If the fire burns through the entire graph then the divisor is $p$-reduced. Otherwise, let $A$ be the part of the graph that was not burnt. The fact that $A$ was left unburnt precisely means that the divisor $D+D_A$ is effective. In other words, $D$ is not $p$-reduced. At this point we fire from $A$. This causes chips of $D$ to move into the complement of $A$ (namely the part that was burnt). We let them move  until one of them reaches a vertex or the point $p$ (by applying $D_A$ with an appropriate $\varepsilon$).  

We now repeat the process until an iteration in which the entire graph burns. 

\begin{example}

Figure \ref{fig:Dhar} exhibits the various steps in Dhar's burning algorithm. In Figure \ref{fig:DharA}, we see a divisor of degree $5$ on the dumbbell graph with a marked point $p$. The algorithm begins with a fire spreading from $p$ in both directions until it reaches the vertex. For this stage the lengths of the edges don't play a role, and neither does the speed at which the fire is burning along each edge.  Since there are 3 `fireman' at the vertex but only 2 incoming fires, the fire is stopped. The part that was not burnt is the union of the right hand cycle and the bridge. 

We fire from that subgraph, resulting in two chips traveling from the vertex towards $p$ at equal speed along the arcs of the cycle. The lengths of the edges do play a role at this step. Since the length of the upper arc equals the length of the lower arc, the chips move until they both reach $p$, and we obtain the divisor depicted in Figure \ref{fig:DharB}. 

Performing the algorithm again, the lone fireman at the vertex cannot stop the two incoming fires. The fire therefore proceeds   until it is stopped by the heroic fireman on the bridge. Firing from the part that was not burnt, the chip on the bridge moves towards the left cycle,  resulting in the divisor shown in Figure \ref{fig:DharC}. In the next iteration, the fire is stopped by the two chips at the left vertex, and firing from the part that was not burnt we reach the divisor in Figure \ref{fig:DharD}. Now, nothing can stop the fire and it burns through the entire graph. That means that we have reached the $p$-reduced divisor.

\begin{figure}
     \centering
     \begin{subfigure}[b]{0.4\textwidth}
         \centering
         \includegraphics[width=0.6\textwidth]{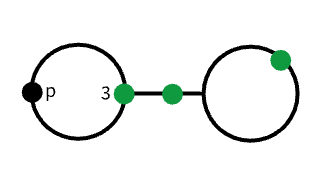}
         \caption{The initial state of the divisor}
         \label{fig:DharA}
     \end{subfigure}
     \hfill
     \begin{subfigure}[b]{0.4\textwidth}
         \centering
         \includegraphics[width=0.6\textwidth]{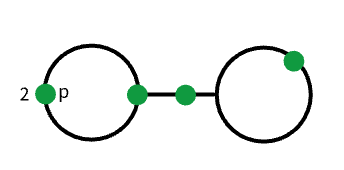}
         \caption{After the first iteration}
         \label{fig:DharB}
     \end{subfigure}
     \hfill
     \begin{subfigure}[b]{0.4\textwidth}
         \centering
         \includegraphics[width=0.6\textwidth]{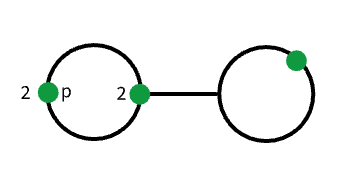}
         \caption{After the second iteration}
         \label{fig:DharC}
     \end{subfigure}
     \hfill
     \begin{subfigure}[b]{0.4\textwidth}
         \centering
         \includegraphics[width=0.6\textwidth]{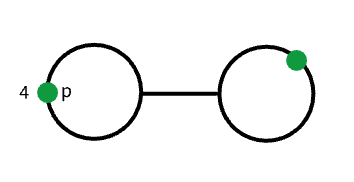}
         \caption{The $p$-reduced representative}
         \label{fig:DharD}
     \end{subfigure}
     \hfill
        \caption{An illustration of Dhar's burning algorithm}
        \label{fig:Dhar}
\end{figure}

\end{example}


\section{The rank of divisors}
As we noticed in various examples above, some divisors  tend to move more easily than others. More accurately, they have a larger linear system (Recall that the linear system of a divisor $D$, denoted $|D|$, is the collection of effective divisors linearly equivalent to $D$).  The degree of freedom of a divisor within its linear system is measured by an invariant known as the \emph{rank}. 
  It is the tropical analogue of the projective dimension of the linear system for divisors on algebraic curves.   

\begin{definition}
Let $D$ be a tropical divisor. If $|D|$ is empty then the  \emph{rank} of $D$ is $-1$. Otherwise, the rank is the largest number $r$ such that $|D-E|$ is non-empty for every effective divisor $E$ of degree $r$. 
The rank is denoted $r(D)$. 
\end{definition}

In other words, the rank is the largest number of chips that we can remove from $D$ so that it is still equivalent to an effective divisor. The quantifiers in the definition are  crucial. For instance, in order to show that $D$ has rank at least $1$, we need to show that $D-p$ is equivalent to an effective divisor for \emph{every} point $p$ of the graph. 

\begin{example}
Let $\Gamma$ be the circle graph and let $D = p_1+p_2$ for some points $p_1$ and $p_2$ of $\Gamma$. Let $q$ be any point of $\Gamma$. Using condition \ref{conditionA}, we may move the chips at $p_1$ and $p_2$ until one of them reaches $q$, so $D-q$ is equivalent to an effective divisor. Since $q$ is arbitrary, it follows that $r(D)\geq 1$.

On the other hand, let $q_1$ and $q_2$ be any points of $\Gamma$, such that the distance between $p_1$ and $q_1$ does not equal the distance between $p_2$ and $q_2$. Using Condition \ref{conditionA} again, we find that $D-q_1 - q_2$ is equivalent to $p' - q_1$ for some point $p'$ which is distinct from $q_1$. But that's as far as we can go. Using Dhar's burning algorithm (or otherwise), it is straightforward to check that $p'-q_1$ is not equivalent to any effective divisor. It follows that $D$ has rank exactly $1$.  
\end{example}

\begin{example}\label{ex:treeRank}
As we saw in Example \ref{ex:tree} any two divisors of the same degree on a tree  are equivalent to each other. So every divisor $D$ of degree $d$ is equivalent to  $p_1 + p_2 + \ldots + p_d$ for any choice  of points $p_1,\ldots,p_d$. Therefore, $D - p_1 - p_2 - \ldots - p_d$ is equivalent to an effective divisor. Since this is true for every choice of $p_1,\ldots,p_d$, it follows that $D$ has rank $d$. Note that a tree is the only graph where the rank of an effective divisor may equal its degree (see Exercise \ref{sec:rankExercises}.\ref{exer:nonTreeRank}).
\end{example}

\begin{example}
If $E$ is effective and $D$ is any divisor, then $r(D + E)\geq r(D)$. Indeed, for any choice of $p_1,\ldots, p_r$, if $D-p_1-\ldots-p_r + \ddiv(f)\geq 0$ then $D + E -p_1-\ldots-p_r + \ddiv(f)\geq 0$ as well.  Note that the rank of $D+E$ is not necessarily greater than   the rank of $D$. 
\end{example}

\begin{example}
If $\Gamma$ is any graph and $D$ has negative degree, then $r(D)=-1$. Indeed, the degree is preserved under linear equivalence, so $D$ is not equivalent to any effective divisor. 
\end{example}

One takeaway from the examples above is that computing ranks by hand can be rather time consuming. The Riemann--Roch theorem \cite{BakerNorine_RiemannRoch,MikhalkinZharkov, GathmannKerber_RiemannRoch}, which shall soon be discussed, allows for a much easier computation of the rank in some cases, or at least a good estimate in others with little effort. 
First, we need to introduce a special divisor which exists for every graph.

\begin{definition}
Let $\Gamma$ be a metric graph.  The \emph{canonical divisor} of $\Gamma$, denoted $K_\Gamma$ is defined at every point $p$ via
\[
K_\Gamma(p) = \val(p) - 2,
\]
where $\val(p)$ is the number of edges adjacent to $p$. 
\end{definition}

One can easily check that the canonical divisor of a graph of genus $g$ has degree $2g-2$ (see Exercise \ref{sec:rankExercises}.\ref{exer:canonicalDegree}).

\begin{example}
Let $\Gamma$ be the line segment on the left side of Figure \ref{fig:piecewise_linear}. Then its canonical divisor has a negative chip on the leftmost vertex and a negative chip on the rightmost vertex. 
If $\Gamma$ is the cycle graph in the centre of the figure then $K_{\Gamma}$ is simply the zero divisor. 
Finally, if $\Gamma$ is the peace sign graph on the right side of the figure then $K_{\Gamma} = \alpha + \beta +\gamma + \delta$.  
\end{example}

We  now present the main result of this section. 
\begin{theorem}[The tropical Riemann--Roch theorem]\label{thm:RiemannRoch}
Let $D$ be a divisor of degree $d$ on a graph $\Gamma$ of genus $g$. Then
\[
r(D) - r(K_\Gamma - D) = d-g+1.
\]
\end{theorem}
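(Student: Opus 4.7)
The plan is to adapt the Baker--Norine argument to the metric setting, following Gathmann--Kerber and Mikhalkin--Zharkov. Setting $h(D) := r(D) + 1$ (so that $h(D) = 0$ iff $|D| = \emptyset$), the identity to prove becomes
\[
h(D) - h(K_\Gamma - D) = d - g + 1.
\]

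The combinatorial core is the description of the \emph{non-special locus} $N \subset \Pic^{g-1}(\Gamma)$ of classes $[\nu]$ with $|\nu| = \emptyset$, together with its stability under the involution $[\nu] \mapsto [K_\Gamma - \nu]$. To produce a non-special class, fix a model $G$ of $\Gamma$ and an acyclic orientation $\sigma$ with a unique source $q$, and set
\[
\nu_\sigma := \sum_{v \in V(G)} \bigl(\mathrm{indeg}_\sigma(v) - 1\bigr)\, v.
\]
Equation~(\ref{eq:genus}) gives $\deg(\nu_\sigma) = g - 1$. Running Dhar's algorithm from $q$, the vertices burn in a topological order of $\sigma$: at each $v \neq q$ the $\mathrm{indeg}_\sigma(v)$ fires arriving from previously-burned $\sigma$-predecessors exhaust the $\mathrm{indeg}_\sigma(v) - 1$ chips, causing $v$ to succumb. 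Thus $\nu_\sigma$ is $q$-reduced with $\nu_\sigma(q) = -1$, and Lemma~\ref{lem:reducedInterpretation} yields $r(\nu_\sigma) = -1$. The involution follows from the identity $\nu_{\bar\sigma} = K_\Gamma - \nu_\sigma$, a direct consequence of $\mathrm{indeg}_\sigma(v) + \mathrm{indeg}_{\bar\sigma}(v) = \val(v)$ summed against $v$, once one verifies that the reversed orientation $\bar\sigma$ admits a basepoint allowing the same Dhar argument to go through.

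With the involution in place, the theorem follows from \emph{Riemann's inequality} $h(D) \geq \max(0,\, d - g + 1)$, combined with a sharpening argument using the self-duality of $N$. The inequality is obtained for $d \geq g$ by writing $D \simeq \nu + E$ with $[\nu] \in N$ and $E \geq 0$ of degree $d - g + 1$, so that $|D| \supseteq |E| \neq \emptyset$; the bound on $h$ follows by successively subtracting generic points and tracking how many subtractions the class can absorb before becoming non-special. Following Baker--Norine, one reformulates $h(D)$ as the minimum over effective $E$ of $\deg(E)$ such that $[D - E]$ lies in a shifted translate of $N$; the symmetry of $N$ under $[\nu] \mapsto [K_\Gamma - \nu]$ then directly produces the equality $h(D) - h(K_\Gamma - D) = d - g + 1$. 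The main obstacle is the involution itself: verifying $r(K_\Gamma - \nu_\sigma) = -1$ requires re-running Dhar's algorithm at a source of $\bar\sigma$, which need not be unique in general, and one must further check that the continuous metric aspect of the burning algorithm does not interfere with the combinatorial succumbing criterion inherited from the orientation.
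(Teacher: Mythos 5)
The paper does not actually prove Theorem~\ref{thm:RiemannRoch}: it states the result and defers to the cited sources (Baker--Norine, Gathmann--Kerber, Mikhalkin--Zharkov), so there is no in-text proof to compare against. Your proposal reconstructs the standard Baker--Norine argument, which is indeed the proof in those references, and the overall architecture --- non-special divisors $\nu_\sigma$ from acyclic orientations, the involution $\nu \mapsto K_\Gamma - \nu$ on the non-special locus, and the min-formula $h(D) = \min\{\deg E : E \geq 0,\ |D-E| = \emptyset\}$ combined with the degree count --- is the right one and would yield the theorem.

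However, the step you yourself flag as ``the main obstacle'' is a genuine gap, and it is the heart of the proof rather than a technicality. The reversal $\bar\sigma$ of an acyclic orientation with unique source is acyclic with a unique \emph{sink}, and may have several sources, each carrying a $-1$ in $\nu_{\bar\sigma}$; Dhar's algorithm based at one of them cannot certify $|\nu_{\bar\sigma}| = \emptyset$ because $\nu_{\bar\sigma}$ fails to be effective away from that basepoint. The fix in Baker--Norine does not use Dhar at all: if $\nu_{\mathcal{O}} + \ddiv(f) \geq 0$, one takes the set $A$ where $f$ attains its maximum, picks a source $v$ of the orientation induced on $A$ (which exists by acyclicity), and checks that every edge contributing to $\mathrm{indeg}_{\mathcal{O}}(v)$ leaves $A$ and hence contributes at most $-1$ to $\ddiv(f)(v)$, forcing $(\nu_{\mathcal{O}} + \ddiv(f))(v) \leq -1$; this works for \emph{arbitrary} acyclic orientations and adapts to piecewise linear $f$ on $\Gamma$. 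Two further points need attention. First, on a metric graph a single fixed model $G$ produces only finitely many classes $[\nu_\sigma]$, whereas the non-special locus in $\Pic^{g-1}(\Gamma)$ is $(g-1)$-dimensional; you must let the orientations live on arbitrary refinements of $G$ (equivalently, use the orientation that Dhar's algorithm induces from a $q$-reduced divisor supported at arbitrary points) for the characterization ``$|D| = \emptyset$ iff $D \sim \nu - E'$ with $\nu$ non-special and $E' \geq 0$'' to hold. Second, your derivation of Riemann's inequality is garbled: from $D \simeq \nu + E$ with $|\nu| = \emptyset$ one cannot conclude $|D| \supseteq |E|$, since $\nu$ is not effective; the correct route is the contrapositive ($|D| = \emptyset$ forces $D \sim \nu - E'$ with $\deg E' = g-1-d \geq 0$, so $d \leq g-1$). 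This last issue is harmless, as the final min-formula argument subsumes the inequality anyway.
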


Here are some immediate consequences of the theorem.
\begin{example}
Let $\Gamma$ be any metric graph of genus $g$. Then $r(K_\Gamma) = g-1$. Indeed,
\[
r(K_\Gamma) - r(K_\Gamma-K_\Gamma) = (2g-2) - g +1.
\]
Since $r(0)=0$ it follows that $r(K_\Gamma) = g-1$.
\end{example}

\begin{example}
If $D$ is any divisor of degree $d$ then $r(D)\geq d-g$. Indeed, the rank of $D-K_\Gamma$ is always at least $-1$, so $r(D) \geq d-g + 1 - 1 = d-g$. 

Any divisor whose rank is greater than $d-g$ is referred to as a \emph{special} divisor.  The study of such divisors is the focus of Brill--Noether theory, which is the focus of Section \ref{sec:BrillNoether}.
\end{example}


An important consequence of  Riemann--Roch  is Clifford's theorem, which provides a general bound on the rank of divisors in terms of their degree. In the following theorem (and anywhere else), a graph that is not a tree is called \emph{hyperelliptic} if it has a divisor of degree $2$ and rank $1$. 

\begin{theorem}\label{thm:Clifford}
Let $D$ be a divisor of degree $d$ on a graph $\Gamma$ of genus $g$. If $d<2g-2$ then $r(D)\geq \frac{d}{2}$. Conversely, if $r = \frac{d}{2}$ then $\Gamma$ is hyperelliptic. 
\end{theorem}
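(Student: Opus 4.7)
The plan is to derive the Clifford-type bound by combining the Riemann--Roch theorem with a subadditivity lemma for the rank. The core auxiliary fact is that, for divisors $D_1$ and $D_2$ with $r(D_1), r(D_2) \ge 0$,
\[
r(D_1 + D_2) \ge r(D_1) + r(D_2).
\]

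First I would establish this lemma. Given any effective divisor $E$ of degree $r(D_1) + r(D_2)$, split the chips of $E$ into two effective divisors $E = E_1 + E_2$ with $\deg E_i = r(D_i)$. By definition of rank, each $D_i - E_i$ is linearly equivalent to an effective divisor $F_i$, so $D_1 + D_2 - E \simeq F_1 + F_2 \ge 0$. Since $E$ was arbitrary, the inequality follows.

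Next, I specialize the lemma to $D_1 = D$ and $D_2 = K_\Gamma - D$, under the main-case assumption that both have non-negative rank. Using $r(K_\Gamma) = g - 1$ (an immediate consequence of Theorem \ref{thm:RiemannRoch}), the lemma yields
\[
r(D) + r(K_\Gamma - D) \le g - 1.
\]
Combining this with the Riemann--Roch identity $r(D) - r(K_\Gamma - D) = d - g + 1$ and averaging the two relations pins $2 r(D)$ against $d$, giving the stated relation between $r(D)$ and $d/2$. The degenerate cases are handled separately: if $r(D) = -1$ there is nothing to prove, and if $r(K_\Gamma - D) = -1$ then Riemann--Roch gives $r(D) = d - g + 1$ directly, and the hypothesis $d < 2g-2$ suffices to close this edge case.

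For the converse, suppose $2 r(D) = d$. Then the inequality $r(D) + r(K_\Gamma - D) \le g - 1$ must hold with equality, i.e.\ the subadditivity lemma is tight for the pair $(D, K_\Gamma - D)$. The plan is to iterate this tightness: decompose $D$ as a sum of $d/2$ effective divisors $E_1 + \cdots + E_{d/2}$, each of degree $2$, and use equality in subadditivity at each step to force each $E_i$ to have rank exactly $1$. Any such $E_i$ is then a degree-$2$ rank-$1$ divisor, which by definition exhibits $\Gamma$ as hyperelliptic. The main obstacle in this part of the argument is verifying that such a decomposition can be produced while preserving the equality condition at every stage. This is the most delicate point of the proof, and I would expect to use reduced divisors and Dhar's burning algorithm from the earlier sections as the natural tools for moving chips around so that the required rank-$1$ pieces can be exhibited.
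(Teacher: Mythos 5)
Your forward-direction argument is essentially the paper's own proof: Riemann--Roch combined with super-additivity of the rank, applied to the pair $(D, K_\Gamma - D)$, giving $r(D) + r(K_\Gamma - D) \le r(K_\Gamma) = g-1$ and hence $2r(D) \le d$. The paper delegates super-additivity to Exercise \ref{sec:rankExercises}.\ref{exer:superadditive}, and your inline proof of it (splitting an arbitrary effective $E$ of degree $r(D_1)+r(D_2)$ as $E_1+E_2$ and using the definition of rank on each piece) is correct. Two small points. First, what this argument proves is $r(D) \le d/2$; the ``$\ge$'' in the printed statement is evidently a typo, and your noncommittal phrase about pinning $2r(D)$ against $d$ should be resolved to that direction (with $\ge$ the statement is false: a generic divisor of degree $0<d<g$ has $r(D)=-1$). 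Second, in your edge case: if $r(K_\Gamma - D) = -1$, Riemann--Roch gives $r(D) = d-g$, not $d-g+1$; either value satisfies the bound when $d < 2g-2$, so nothing breaks, but note also that this edge case, together with $r(D)=-1$, is exactly why the honest hypothesis of Clifford is that $D$ is \emph{special}, i.e.\ both $|D|$ and $|K_\Gamma - D|$ are non-empty.

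The converse is where the genuine gap lies. The paper does not prove it either---it explicitly calls this the hard part and cites \cite{Coppens_Clifford,Len_Clifford}---but your proposed route would not close even in outline. Equality in $r(D)+r(K_\Gamma - D) \le g-1$ is a statement about the single pair $(D, K_\Gamma - D)$, and there is no way to ``iterate tightness'' over a decomposition $D \simeq E_1 + \cdots + E_{d/2}$ into effective degree-$2$ pieces: super-additivity gives $r(D) \ge \sum_i r(E_i)$, an inequality pointing the wrong way for your purposes. Knowing $r(D) = d/2$ only bounds $\sum_i r(E_i)$ \emph{above} by $d/2$; to exhibit $\Gamma$ as hyperelliptic you need a \emph{lower} bound $r(E_i) \ge 1$ for some degree-$2$ piece, and neither the equality case of the lemma nor reduced divisors and Dhar's algorithm produce one---a priori every $E_i$ in every decomposition could have rank $0$. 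Producing a rank-$1$ divisor of degree $2$ from the hypothesis $2r(D)=d$ is precisely the content of the hard direction, and the actual proofs in the cited references require genuinely different arguments. As written, your converse paragraph restates the goal rather than giving a strategy that could be completed with the tools of these notes.
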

 
\begin{proof}[Proof of the forward direction]
By the Riemann--Roch theorem, we have 
\[
r(D) - r(K_{\Gamma}-D) = d - g + 1.
\]
Since the rank of divisors is super-additive (see Exercise \ref{sec:rankExercises}.\ref{exer:superadditive}), we also have
\[
r(D) + r(K_{\Gamma}-D) \leq r(K_{\Gamma}) = g-1.
\]
Adding these two equalities we conclude that
\[
2r(D) \leq d-g+1+g-1 = d. 
\]
\end{proof}
The converse direction is referred to as the hard part of Clifford's theorem and the proof can be found in \cite{Coppens_Clifford,Len_Clifford}.  
Additional properties of  hyperelliptic graphs are dealt with in Exercises \ref{sec:rankExercises}.\ref{exer:hyperelliptic} and  \ref{sec:PrymExercises}.\ref{exer:PrymHyperelliptic}.

Arguably, the main motivation for studying the rank of tropical divisors is their connection with the rank of algebraic divisors. This is given by the following result, known as  Baker's specialization lemma  \cite[Lemma 2.8]{Baker_specialization}
\begin{lemma}\label{lem:specialization}
If $\Gamma = \trop(C)$ and $\calD = \trop(D)$, then $r(\calD)\geq r(D)$. 
\end{lemma}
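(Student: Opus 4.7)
The plan is to unwind the definition of rank. To show $r(\calD) \geq r(D) = r$, it suffices to prove that for every effective divisor $\calE$ on $\Gamma$ of degree $r$, the divisor $\calD - \calE$ is equivalent to an effective divisor on $\Gamma$. The strategy is to transport $\calE$ back to the algebraic curve $C$, apply the definition of the rank of $D$ there, and then tropicalize the resulting equivalence back down to $\Gamma$.

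First I would lift $\calE$ to an effective divisor $E$ on $C$ with $\trop(E) = \calE$. This is possible because every point of $\Gamma$ arises as the tropicalization of some point of $C$; writing $\calE = \sum a_i q_i$ with $q_i \in \Gamma$ and $a_i \geq 0$, one picks $p_i$ on $C$ with $\trop(p_i) = q_i$ and sets $E = \sum a_i p_i$, which is automatically effective with $\trop(E) = \calE$. Since $r(D) \geq r$ and $E$ is effective of degree $r$, the algebraic definition of rank produces an effective divisor $F$ on $C$ with $F \simeq D - E$, say $D - E = F + \ddiv(f)$ for some nonzero rational function $f$ on $C$.

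The critical compatibility is the \emph{slope formula}: one has $\trop(\ddiv(f)) = \ddiv(\ph)$ on $\Gamma$, where $\ph : \Gamma \to \RR$ is the piecewise linear function whose value at a point $x$ is recovered from the valuation of $f$ at the corresponding point of the Berkovich analytification of $C$. Granting this, applying $\trop$ to the identity $D - E = F + \ddiv(f)$ yields $\calD - \calE = \trop(F) + \ddiv(\ph)$, so $\calD - \calE \simeq \trop(F)$ on $\Gamma$. Tropicalization sends points of $C$ to single chips on $\Gamma$, so it visibly preserves effectiveness; thus $\trop(F) \geq 0$, and $\calD - \calE$ is equivalent to an effective divisor, as required.

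The main obstacle is the slope formula itself: one must verify that $\ph$ is continuous, piecewise linear with integer slopes, and that the divisor it defines combinatorially on $\Gamma$ agrees with the tropicalization of the algebraic principal divisor $\ddiv(f)$. This is genuine non-archimedean input, and it is the reason the specialization lemma is usually proved using tools from Berkovich analytic geometry rather than from chip-firing considerations alone. Once the slope formula is in hand, the argument above is purely formal: lift, equivalence on $C$, tropicalize, conclude.
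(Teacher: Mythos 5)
The paper does not actually prove this lemma --- it is quoted from Baker's paper \cite[Lemma 2.8]{Baker_specialization} without proof --- so there is no internal argument to compare against. That said, your proposal is precisely the standard proof of the specialization inequality: lift the effective divisor $\calE$ to the curve, use the algebraic rank to produce an effective $F\simeq D-E$, and push the equivalence back down, with the non-archimedean slope formula (Poincar\'e--Lelong) guaranteeing that $\trop(\ddiv(f))=\ddiv(\ph)$ is principal on $\Gamma$. This is exactly Baker's argument, translated from the language of specialization on a regular semistable model to the Berkovich-analytic setting, and you correctly isolate the slope formula as the one genuinely non-formal ingredient.

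One step deserves more care than you give it: the claim that every point of $\Gamma$ is $\trop(p)$ for a closed point $p$ of $C$ is false over a general non-archimedean field. The image of the closed points in the skeleton consists only of points whose distances to the vertices lie in the value group; over $\CC_p$, for instance, a point at irrational distance along an edge is the retraction of no closed point, so your divisor $\calE$ need not lift. The standard repair is to base-change to a complete algebraically closed field whose value group is all of $\RR$ (the rank of $D$ is unchanged under such an extension, since $h^0$ is), after which every point of $\Gamma$ does lift and your argument goes through verbatim. With that amendment, and granting the slope formula as you do, the proof is complete and is the one the cited reference gives.
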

In other words, the rank of divisors may only increase under tropicalization. Consequences of this lemma will be discussed in the next subsection.

\subsection{A bit of Brill--Noether theory}\label{sec:BrillNoether}

The tropical theory of divisors and their rank has been in the centre of recent developments in Brill--Noether theory,  which studies algebraic curves according to their special divisors.
In this subsection we give a short, somewhat informal, introduction to the theory.   I recommend  \cite{BakerJensen} or \cite{JensenPayne_survey} for a more detailed survey.

\begin{definition}
Let $X$ be a metric graph or an algebraic curve and fix integers $d$ and $r$. The \emph{Brill--Noether variety} of $X$, denoted $W^r_d(X)$, is the collection of divisor classes of degree $d$ and rank at least $r$ on $X$. 
\end{definition}

As the name suggests, Brill--Noether theory is concerned with the varieties $W^r_d(X)$. Here are several examples of such varieties. 

\begin{example}
As we saw in Example \ref{ex:treeRank}, if $\Gamma$ is a tree then $W^r_r(\Gamma) = \Pic^d(\Gamma)$ for any $r\geq -1$. If $\Gamma$ is not a tree then $W^r_r(\Gamma)$ is empty for any $r>0$. 
If $\Gamma$ has genus $g$ and $r < g-1$ then it follows from Clifford's theorem (Theorem \ref{thm:Clifford}) that $W^r_{2r}(\Gamma)$ is non-empty if and only if $\Gamma$ is hyperelliptic. 
\end{example}

\begin{example}\label{ex:BNg13}
Suppose that $\Gamma$ is the chain of $4$ loops whose edges have generic length. By \emph{generic} we mean that there are no obvious relation between the lengths. For instance, if they are chosen at random or if they are algebraically independent of each other. 
Then  $W^1_3$ has dimension $0$.  In other words, there are only finitely many divisors on $\Gamma$ of degree $3$ and rank $1$. 
\end{example}

As seen in Exercise \ref{sec:rankExercises}.\ref{exer:hyperelliptic}, being hyperelliptic has strong implications for the structure of the graph. 
More broadly, the moduli space  of  metric graphs is stratified according to the type of divisors that they carry.  The following result, known as the Brill--Noether theorem, characterizes the Brill--Noether varieties for large families of algebraic curves.

\begin{theorem}
Let $d\geq 1$ and $r\geq 0$. Then for any smooth curve $C$,
\[
\dim(W^r_d)(C) \geq g - (r+1)(g-d+r).
\]
Furthermore, if $C$ is general in moduli, then $\dim(W^r_d)(C) = g - (r+1)(g-d+r)$. 
\end{theorem}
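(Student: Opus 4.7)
The plan is to split the argument into the lower bound, which holds for every smooth curve, and the equality on the right, which requires the genericity hypothesis. The techniques for these two halves are quite different in character: the lower bound is purely algebraic, while the upper bound is where the tropical machinery of these notes pays off.

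For the lower bound I would argue on the Jacobian $\Pic^d(C)$. Fix a divisor $E$ of large degree and consider the map of coherent sheaves on $\Pic^d(C)$ constructed from the Poincar\'e line bundle twisted by $E$, comparing global sections on fibers with sections on $E$. The Brill--Noether locus $W^r_d(C)$ is then naturally realized as the locus where this map has corank at least $r+1$, i.e., as a determinantal locus of expected codimension $(r+1)(g-d+r)$. Applying the Kempf--Kleiman--Laksov theorem on degeneracy loci yields nonemptiness whenever $\rho(g,r,d):= g-(r+1)(g-d+r) \geq 0$ and the desired bound $\dim W^r_d(C) \geq \rho(g,r,d)$. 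This step uses no tropical input.

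For the equality on a general curve the tropical viewpoint of these notes is ideally suited. By Baker's specialization lemma (Lemma \ref{lem:specialization}), if $C$ admits a regular semistable degeneration whose dual tropicalization is a metric graph $\Gamma$ of genus $g$, then $\dim W^r_d(C) \leq \dim W^r_d(\Gamma)$, because tropicalization can only increase the rank of a divisor. It therefore suffices to exhibit a single such $\Gamma$ with $\dim W^r_d(\Gamma) = \rho(g,r,d)$ arising as the tropicalization of some smooth curve. Following Cools--Draisma--Payne--Robeva, the natural candidate is the chain of $g$ loops with generic edge lengths, directly generalizing Example \ref{ex:BNg13}.

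The main obstacle, and the combinatorial heart of the proof, is the tropical computation on this chain. I would classify $p$-reduced divisors on the chain via a \emph{lingering lattice path} encoding that records, for each loop, whether the chip jumps forward, lingers, or lags, yielding a sequence in $\{-1,0,\ldots,r\}^g$. Dhar's burning algorithm translates the rank inequality $r(D) \geq r$ into explicit combinatorial constraints on these sequences, and the genericity of the edge lengths prevents accidental coincidences that would otherwise inflate the dimension of the locus in $\Pic^d(\Gamma)$. A careful enumeration then identifies the dimension with $\rho(g,r,d)$, providing the required upper bound and, combined with the algebraic lower bound, completing the proof.
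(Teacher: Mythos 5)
Your proposal follows essentially the same route the notes sketch: the lower bound via the Kempf and Kleiman--Laksov determinantal-locus argument on $\Pic^d(C)$, and the generic upper bound by specializing to a chain of $g$ loops with generic edge lengths and computing $W^r_d(\Gamma)$ via lingering lattice paths, following Cools--Draisma--Payne--Robeva. The only step you leave implicit is why exhibiting a single curve suffices for the statement about a \emph{general} curve --- upper semicontinuity of $\dim W^r_d$ over the irreducible moduli space $M_g$ --- which the notes explicitly flag as a key ingredient.
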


By a ``general curve" we mean that, within the moduli space  $M_g$ classifying curves of genus $g$, there is an open dense set of curves where the equality holds. Put differently, if we had a way of choosing a curve at random, there would be a 100\% chance that its Brill--Noether variety had dimension $g - (r+1)(g-d+r)$.
The number 
\[
\rho(g,r,d) = g - (r+1)(g-d+r)
\]
is known as the \emph{Brill--Noether number}.  


The Brill--Noether theorem is the culmination of many different results, most notably \cite{GriffithsHarris_BrillNoether, Kempf_BrillNoether, KleimanLaskov_Special}. 
A key aspect of the proof is the observation that the moduli space of curves is irreducible and that the Brill--Noether locus is open. Therefore, in order to prove the upper bound for general curves, 
it suffices to find a single curve whose Brill--Noether variety has the correct dimension. 
It sounds like this part should be simple: we claim that a certain property should hold for almost every curve, and all we need is to find a single example, so let's just choose a curve at random. However, as it turns out, every time that we write down equations for a curve, we make some choices, so the resulting curve is not exactly random. The following quote, which I believe is due to Dave Jensen, sums the situation quite well. 
\medskip
\begin{quote}
    {\it We are searching for hay in a haystack, but all we have is a magnet. }
\end{quote}
\medskip

This is where tropical geometry can help. First, we find a graph of of genus $g$ that satisfies the statement of the Brill--Noether theorem. Namely, a graph $\Gamma$ such that $\dim(W^r_d(\Gamma))=\rho$. Next, we observe that $\Gamma$ is the tropicalization of \emph{some} algebraic curve $C$ \cite{ACP_skeleton}.
Now, the Specialization Lemma \ref{lem:specialization} implies that $C$ satisfies the statement of Brill--Noether as well.  While we don't have an explicit description of this curve $C$, we now know that it exists!

As we saw in Example \ref{ex:BNg13}, the statement of Brill--Noether holds for the chain of $4$ loops. More generally, it was shown in \cite{CoolsDraismaPayneRobeva} that, whenever $\Gamma$ is a chain of $g$ loops, where the edges have generic lengths, its Brill--Noether variety has the expected dimension $\rho$. As a consequence, we obtain a tropical proof of the celebrated Brill--Noether theorem.  Note that a tropical version the Brill--Noether theorem is only known for the chain of loops. In fact, there is an open set in the moduli of metric graphs where the statement of the Brill--Noether theorem does not hold \cite{Jensen_BNnotDense}. 

Similar techniques, along with very deep lifting techniques, were later used to find a formula for the dimension of the Brill--Noether variety for curves that are non-general in moduli \cite{Pflueger_kgonalcurves, JensenRanganathan_BrillNoetherwithfixedgonality}. Other notable results include the Maximal Rank Conjecture for quadrics \cite{JensenPayne_MRC}, that was subsequently extended to all degrees via non-tropical techniques \cite{Larson_MRC}, and a determination of the Kodaira dimension of $\mathcal{M}_{22}$ and $\mathcal{M}_{23}$ \cite{FarkasJensenPayne_Kodaira}. See also \cite{ABFS_rationalBN} for an explicit construction of general Brill--Noether curves over $\QQ$.


\section{The structure of the tropical Jacobian}\label{sec:JacobianStructure}
As previously mentioned, the tropcial Jacobian classifies divisor classes of degree 0, or  equivalently
\[
\Jac(\Gamma) = \Div^0(\Gamma)/\Prin(\Gamma).
\]

We now begin a closer investigation of its structure.  A priori, elements of the Jacobians could consist of an arbitrary number of positive and negative chips (as long as there is an equal number of each). To obtain a nice description of the Jacobian, it would be beneficial to bound that number. Furthermore, it would be great if the Jacobian had a nice geometric structure that reflected the geometry of the graph. 

\subsection{The Abel--Jacobi map}
The goals above are more easily exhibited when working in $\Pic^g(\Gamma)$ instead of $\Jac(\Gamma)$. Fix a point $p\in\Gamma$. As previously discussed, there is a bijection between  $\Pic^g(\Gamma)$ and $\Jac(\Gamma)$ by sending a divisor $D$ of degree $g$ to $D-g\cdot p$ (convince yourself that this is true!). So structural results pertaining to $\Pic^g(\Gamma)$ can be transferred to results concerning $\Jac(\Gamma)$.  The following fact, which follows from the Riemann--Roch theorem \ref{thm:RiemannRoch} will be useful. 

\begin{theorem}\label{thm:AJsurjective}
Every divisor of degree $g$ on a graph of genus $g$ is equivalent to an effective divisor. 
\end{theorem}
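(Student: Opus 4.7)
The plan is to extract the statement as a direct one-line consequence of the Riemann--Roch theorem \ref{thm:RiemannRoch}, which the excerpt explicitly invites me to use. The only content I need is the trivial lower bound $r(E) \geq -1$ valid for every divisor $E$, together with the observation that $r(D) \geq 0$ is equivalent to $|D|$ being non-empty, i.e.\ to $D$ being linearly equivalent to some effective divisor.

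Concretely, first I would apply Theorem \ref{thm:RiemannRoch} with $d = g$, which yields
\[
r(D) - r(D - K_\Gamma) = d - g + 1 = 1,
\]
and so $r(D) = 1 + r(D - K_\Gamma)$. Next I would invoke the general inequality $r(D - K_\Gamma) \geq -1$, which holds by definition of the rank (it is $-1$ when $|D - K_\Gamma|$ is empty and non-negative otherwise). Combining the two gives $r(D) \geq 0$, and by definition of the rank this means $|D|$ is non-empty, so $D$ is linearly equivalent to an effective divisor.

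There is no real obstacle here; the statement is immediate once Riemann--Roch is in hand. The only conceptual point worth flagging is that the bound $r(D - K_\Gamma) \geq -1$ is sharp in the case $d = g$: the quantity $r(D - K_\Gamma)$ can genuinely equal $-1$, for example when $D$ is a generic divisor of degree $g$, in which case $D - K_\Gamma$ has degree $2 - g \leq 0$ and will typically fail to be equivalent to an effective divisor for $g \geq 2$. So the proof really does need Riemann--Roch and not merely a direct construction, and the theorem is in fact an exact threshold statement: $g$ is the smallest degree for which every divisor on every genus-$g$ graph is guaranteed to have a non-empty linear system.
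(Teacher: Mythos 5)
Your argument is correct and is exactly the derivation the paper intends: the text introduces Theorem \ref{thm:AJsurjective} as a fact ``which follows from the Riemann--Roch theorem,'' and your computation $r(D) = 1 + r(D-K_\Gamma) \geq 0$ for $\deg D = g$ is the standard one-line realization of that claim. Your closing remarks on sharpness are also accurate and consistent with the paper's later observation that for $d<g$ the Abel--Jacobi image is not full-dimensional.
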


That's very convenient! It means that every element of $\Pic^g(\Gamma)$ can be represented as $p_1 + p_2 + \cdots + p_g$ where each $p_i\in\Gamma$. 
To make things a bit more precise, let $\Sym^d(\Gamma)$ be the symmetric product of $\Gamma$ with itself $d$ times. Points of 
$\Sym^d(\Gamma)$ are unordered $d$-tuples $\{p_1,p_2,\ldots,p_d\}$, where each $p_i\in\Gamma$. The symmetric product can naturally be identified with the set of effective divisors of degree $d$, so we often use the notation  $p_1 + \ldots + p_d$ instead. 
We are now able to define a map that is pertinent in the study of the Jacobian and its interplay with the graph.

\begin{definition}
Let $\Gamma$ be a metric graph. The \emph{Abel--Jacobi} map of degree $d$ is the map
\[
\Phi^d:\Sym^d(\Gamma)\to\Pic^d(\Gamma)
\]
sending $p_1+\cdots + p_d$ to the divisor class $[p_1+\cdots + p_d]$.
We often omit the $d$ when known from context. 
\end{definition}

\smallskip
\noindent{\bf Warning.}
The term `degree' in the definition of the Abel--Jacobi map refers to the degree of the divisors. However, the degree of the map (as a harmonic morphism of polyhedral complexes) is certainly not $d$. As we will soon see, the degree is, in fact, $1$.

\medskip
Now, Theorem \ref{thm:AJsurjective} is equivalent to  saying that the Abel Jacobi map $\Phi^g:\Sym^g(\Gamma)\to\Pic^g(\Gamma)$ is surjective. 
An analogous version of Theorem \ref{thm:AJsurjective} does not hold in lower degrees. In fact, since $\dim\Sym^d(\Gamma)=d$ and $\dim\Pic^d(\Gamma) = \dim\Jac(\Gamma)=g$, the image of the Abel--Jacobi map is not even a full dimensional subset of $\Pic^d(\Gamma)$. In other words, most divisors of degree $d$ are not equivalent to any effective divisor when $d<g$. 

\begin{example}\label{ex:dumbbell}
Consider the chain of 2 loops described in Figure \ref{fig:chainOf2Loops}, and let $D$ be any divisor of degree $2$. We claim that $D$ is equivalent to an effective divisor, thus confirming Theorem \ref{thm:AJsurjective}. First, we can assume that $D$ does not have chips in the interior of the bridge. Indeed, Condition \ref{conditionA} allows us to move chips along bridges while maintaining linear equivalence. 
Since the degree of $D$ is $2$, its restriction to one of the loops, say the left one, has degree $d$ for some $d\geq 1$. Similarly to Example \ref{ex:cycle}, using Condition \ref{conditionA}, we can move the chips of $D'$ in pairs towards the vertex, which we denote  $v$, so $D'$ is equivalent to $q + (d-1)\cdot v$. Moving the $d-1$ chips at $v$ across the bridge, the restriction to the right hand cycle has degree at least $1$. Repeating the same process on the right hand cycle, we obtain an effective divisor. 

On the other hand, consider the divisor  $q + q' - v$ of degree $1$, where $q$ and $q'$ are points in the interior of the left and right cycle respectively, and $v$ is the vertex of the left cycle. We can use Dhar's burning algorithm to see that the divisor is $v$-reduced, and therefore not equivalent to an effective divisor. Indeed, starting a fire from $v$, there are fires reaching $q$ from both directions thus burning it. There is also a fire burning through the bridge, and continuing in both directions along the right hand cycle, thus burning the chip at $q'$.

\begin{figure}
    \centering
    \includegraphics[width=.3\linewidth]{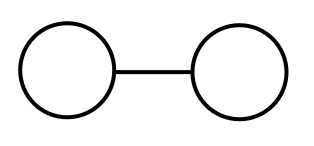}
    \caption{The chain of 2 loops}
    \label{fig:chainOf2Loops}
\end{figure}
\end{example}

The symmetric product has a natural structure of a metric space, as follows. 
For fixed edges $e_1,\ldots,e_d$ of $\Gamma$ (where repetition is allowed), the set  $I=e_1\times e_2\times\cdots\times e_d$ 
classifies ordered tuples $(p_1,p_2,\ldots,p_d)$.
 To  get a cell of the symmetric product, we need to identify points of $I$ if one is obtained from the other by permuting the coordinates. 
 We may similarly construct cells where some of the chips are restricted to vertices. 
 By performing this for each $d$-tuple of edges and vertices, and identifying  cells along their boundary where appropriate, the symmetric product naturally becomes a polyhedral complex.

Now choose a cell of $\Sym^g(\Gamma)$ induced by distinct edges  $e_1,\ldots,e_g$ of $\Gamma$, and suppose that the complement of those edges is connected. In other words, the complement of $e_1,\ldots,e_g$ is a spanning tree. 
Then one can check (for instance, using Dhar's burning algorithm, see Exercise \ref{sec:chipFiringExercises}.\ref{exer:rigidDivisors}) that, if each $p_i$ is in the interior of $e_i$, the divisor $p_1+\ldots+p_g$ is not equivalent to any effective divisor other than itself. In other words, these $g$-dimensional cells of $\Sym^g(\Gamma)$ map injectively into $\Pic^g(\Gamma)$. Cells of   $\Sym^g(\Gamma)$ that are not of this form map to lower dimensional cells of $\Pic^g(\Gamma)$. 

\begin{example}\label{ex:twoLoopsAndTheta}
Let $\Gamma$ be the chain of two loops from Figure \ref{fig:chainOf2Loops} and denote the loop edges by $e$ and $e'$ respectively. 
As we saw in Example \ref{ex:dumbbell}, every divisor of degree $2$ is equivalent to $q+q'$, where $q\in e$ and $q'\in e'$. So $\Pic^2(\Gamma)$ has a unique two-dimensional cell, namely $\Phi^2(e\times e')$. Other cells of $\Sym^2(\Gamma)$ map to lower dimensional cells of $\Pic^2(\Gamma)$. For instance, if $v$ is a vertex then $e\times v$ is $1$-dimensional, so its image is at most $1$-dimensional. But there are also $2$-dimensional cells of the symmetric product that map to $1$-dimensional cells via the Abel--Jacobi map. For instance, if $q$ and $q'$ are both in $e$ then $q+q'$ is equivalent to $v+q''$ for some $q''$ on $e$ and the vertex $v$ of $e$. Therefore, the classes $[q+q']$ are parameterized by the choice of a point $q''$. In particular, $\Phi^2(e\times e)$ is 1-dimensional.


Now let $\Gamma$ be the theta graph depicted in Figure \ref{fig:binary} with edges $e_1,e_2,e_3$. Then divisors of the form $q'+q''$  where $q'$ and $q''$ belong to distinct edges have a unique effective representative. So  the Jacobian has three maximal cells  $e_1\times e_2, e_1\times e_3$, and $e_2\times e_3$. 
\end{example}

\noindent {\bf Conclusion}
The Jacobian of a graph is tiled by cells indexed by the different spanning trees of the graph. The interior of each cell parameterizes divisors supported on the complement of the spanning tree. 

\begin{remark}
We can naturally assign a volume to each maximal cell by taking the product of the edge lengths of the corresponding edges. We conclude that the volume of the Jacobian coincides with a weighted sum over all the spanning trees. This result is known as the \emph{tropical Kirchhoff matrix-tree theorem}.  In the special case where all edge lengths are $1$, the volume of the Jacobian equals exactly the number of spanning trees. This provides a new interpretation for the classical Kirchhoff matrix-tree theorem. See \cite[Theorem 1.5]{ABKS_Canonical} for a treatment of the volume of the tropical Jacobian and \cite{GhoshZakharov_volume} for the volume of the tropical Prym. 
\end{remark}

\subsection{Jacobians and real tori}
Now that we know that the maximal dimensional cells of the Jacobian can be parameterized using tuples of edges, the question remains how they fit together. 
As it turns out, the Jacobian has a remarkably nice geometric structure. 

\begin{theorem}\cite[Theorem 3.4]{BakerFaber_tropicalAbelJacobi}\label{thm:JacobianStructure}
Suppose that the genus of $\Gamma$ is $g$. Then the Jacobian of $\Gamma$ is isomorphic, as a group, to the real torus $\RR^g/\ZZ^g$. 
\end{theorem}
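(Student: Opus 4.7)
My plan is to produce an explicit surjective group homomorphism $\Psi \colon \RR^g \to \Jac(\Gamma)$ whose kernel is a rank-$g$ lattice, thereby inducing the desired isomorphism $\RR^g/\ZZ^g \cong \Jac(\Gamma)$. The building blocks will be the \emph{basic cycles} associated to a spanning tree: fix a spanning tree $T$ in a minimal model of $\Gamma$ and let $e_1, \ldots, e_g$ be the complementary edges; each $e_i$ together with the unique path in $T$ between its endpoints forms a cycle $\gamma_i \subset \Gamma$ of total length $L_i$, and $\{\gamma_1,\dots,\gamma_g\}$ is a $\ZZ$-basis of $H_1(\Gamma, \ZZ)$.

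For each $i$, fix a basepoint $v_i \in \gamma_i$ and parametrize $\gamma_i$ by signed arclength, writing $q_i(s)$ for the point reached after traveling distance $s$ from $v_i$ along $\gamma_i$ (interpreted modulo $L_i$). Define
\[
\Psi(s_1, \ldots, s_g) = \sum_{i=1}^g \bigl[q_i(s_i) - v_i\bigr] \in \Jac(\Gamma).
\]
Each summand gives a group homomorphism $\RR/L_i\ZZ \to \Jac(\Gamma)$: restricted to the subgraph $\gamma_i$, the assignment $s \mapsto [q_i(s) - v_i]$ is precisely the isomorphism $\RR/L_i\ZZ \cong \Jac(\gamma_i)$ established in Example \ref{ex:circleGraph}, and the inclusion $\gamma_i \hookrightarrow \Gamma$ induces a group homomorphism $\Jac(\gamma_i) \to \Jac(\Gamma)$ (since any PL function with integer slopes on $\gamma_i$ extends by constants on the rest of $\Gamma$). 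Summing over $i$ shows that $\Psi$ is a group homomorphism that factors through $\prod_i \RR/L_i\ZZ$, which is isomorphic as a topological group to $\RR^g/\ZZ^g$.

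It remains to check that $\Psi$ is a bijection. Surjectivity follows from combining Theorem \ref{thm:AJsurjective} with the spanning-tree tiling of $\Pic^g(\Gamma)$ described before Example \ref{ex:twoLoopsAndTheta}: every class has an effective degree-$g$ representative which, after sliding chips via Condition \eqref{conditionA}, can be brought to the form $\sum q_i(s_i)$. For injectivity, if $\Psi(\vec s)=0$ then the effective divisors $\sum q_i(s_i)$ and $\sum v_i$ are linearly equivalent; the rigidity of divisors with at most one chip per complementary edge forces the underlying multisets of points to coincide, giving $s_i \in L_i \ZZ$ for each $i$. I expect the most delicate step to be surjectivity, since generic-cell representatives do not automatically cover degenerate configurations where chips coalesce at tree vertices or slide onto tree edges; I plan to address this by observing that $\Psi(\RR^g)$ is a closed subgroup of $\Jac(\Gamma)$ containing a dense subset, and hence equal to the whole Jacobian.
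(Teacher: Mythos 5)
Your construction breaks at the first step: the assignment $s \mapsto [q_i(s) - v_i]$ is not a group homomorphism $\RR/L_i\ZZ \to \Jac(\Gamma)$, because the inclusion $\gamma_i \hookrightarrow \Gamma$ does \emph{not} induce a map $\Jac(\gamma_i)\to\Jac(\Gamma)$. Your justification --- that a PL function $\varphi$ on $\gamma_i$ extends by constants to the rest of $\Gamma$ --- fails whenever the closure of $\Gamma\setminus\gamma_i$ meets $\gamma_i$ in more than one point: a continuous extension that is constant on a component attached to $\gamma_i$ at two points $a,b$ exists only if $\varphi(a)=\varphi(b)$, and any non-constant extension changes $\ddiv$. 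Concretely, take $\Gamma$ the theta graph with vertices $u,v$ and edges $e_1,e_2,e_3$ of lengths $\ell_1<\ell_3$, spanning tree $T=\{e_3\}$, and basic cycle $\gamma_1=e_1\cup e_3$ with basepoint $v_1=u$, oriented along $e_1$ first. Then $q_1(\ell_1)=v$ and $q_1(2\ell_1)=w$, the point of $e_3$ at distance $\ell_1$ from $v$. On the circle $\gamma_1$ one has $2v\simeq u+w$ (both have arclength sum $2\ell_1$), but in $\Gamma$ both $2v$ and $u+w$ are $u$-reduced (run Dhar's algorithm from $u$: in the first case three incoming fires overwhelm the two chips at $v$; in the second the fire passes through $v$ unhindered and burns $w$ from both sides), so by uniqueness of reduced representatives $2v\not\simeq u+w$. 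This is exactly the additivity identity $[q_1(\ell_1)-u]+[q_1(\ell_1)-u]=[q_1(2\ell_1)-u]$ that your $\Psi$ requires, so $\Psi$ is not a homomorphism, and the surjectivity argument (``$\Psi(\RR^g)$ is a closed subgroup'') inherits the problem. The underlying reason is that moving two chips in opposite directions along $\gamma_1$ has zero momentum around $\gamma_1$ but nonzero momentum around the other cycles of $\Gamma$ that share the edge $e_3$, so Condition \ref{conditionA} is violated.

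The structural error is that your period lattice $\prod_i L_i\ZZ$ records only the lengths of the basis cycles and forgets how they overlap. The correct lattice --- which is what the paper's construction produces --- is generated by the vectors $u_i=(\langle\sigma_i,\sigma_j\rangle)_j$ of oriented, length-weighted intersection numbers, i.e.\ by the full Gram matrix of a homology basis; yours is its diagonal. The two agree precisely when the basis cycles are pairwise edge-disjoint (bouquets, chains of loops --- which is why your map checks out on those examples), but not for the theta graph, where the Gram matrix has off-diagonal entries $-\ell_3$. The paper also runs the map in the opposite direction: it defines $f:\Div^0(\Gamma)\to\RR^g/\Lambda$ by pairing a path from $p$ to $q$ with each basis cycle, shows independence of the path modulo $\Lambda$, and then proves $f_0$ is well defined on classes, injective, and surjective. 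To salvage your direction of travel you would have to send $s$ to the class of the \emph{path} from $v_i$ to $q_i(s)$ projected onto the cycle space, and quotient by the Gram lattice rather than by $\prod_i L_i\ZZ$; at that point you have reconstructed the Baker--Faber map rather than simplified it.
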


This isomorphism is, in fact, a homeomorphism. Here, the topology of the Jacobian is the one induced from $\Sym^g(\Gamma)$ via the Abel--Jacobi map, so two effective divisors $E$ and $E'$ are `close' to each other when they have representatives whose chips are all close to each other.

\begin{example}
We have already seen in Example \ref{ex:cycle} that divisor classes of degree 1 on the cycle graph are given by choosing a single point on the cycle. This gives a bijection between the Jacobian and the graph itself, which is homeomorphic to  $\RR/\ZZ$. 

As we saw in Example \ref{ex:twoLoopsAndTheta}, the Jacobian of the chain of 2 loops has a unique maximal cell given by $e_1\times e_2$. Since going full circle along either of the loops brings us back to the same point, the Jacobian is, in fact, isomorphic to $\RR^2/\ZZ^2$. 
One can similarly exhibit that the Jacobian of the chain of $g$ loops or the bouquet of $g$ flowers ($g$ loops attached to a single vertex) is $\RR^g/\ZZ^g$. 

For the theta graph, we have three maximal cells $I_1\times I_2,\, I_2\times I_3$, and $I_1\times I_3$. One can check that, by gluing these cells along the boundary, we obtain a 2-torus as predicted by Theorem \ref{thm:JacobianStructure}
\end{example}

Let us see how to get a map from the Jacobian to the real torus of dimension $g$. Fix a point $p$ of $\Gamma$ and choose an oriented basis $\sigma_1,\sigma_2,\ldots, \sigma_g$ for the homology of $\Gamma$ (that is, each $\sigma_i$ is a cycle with a chosen direction). For any  point $q$  of $\Gamma$, choose a path $\gamma$ from $p$ to $q$. For each $i$ let $x_i$ be the length of the oriented intersection between $\gamma$ and $\sigma_i$. We measure this length with respect to the orientation, namely, a segment where  $\gamma$ and $\sigma_i$ are oriented in opposite directions contributes a negative length. Now, map the divisor $p-q$ to the point $(x_1,x_2,\ldots,x_g)\in\RR^g$. 

We need to be careful, because the construction as stated depends on the choice of $\gamma$, so doesn't yet give rise to a well defined map. 
However, the difference between any two such paths is given by entire copies of each basis element $\sigma_i$. Therefore, the image of $p$ in $\RR^g$ will vary by a combination of vectors of the form $u_i$ where the $j$-th coordinate of $u_i$ is the oriented intersection between $\sigma_i$ and $\sigma_j$. 

Therefore, we do get a well-defined map if we mod-out the target $\RR^g$ by the lattice generated by $u_1,u_2,\ldots, u_g$, that is, we get a map 
\[
f:\Gamma\to\RR^g/(\ZZ u_1+\ZZ u_2+\cdots +\ZZ u_g).
\]
Note that the  $u_1,u_2,\ldots, u_g$ actually form a full-rank lattice, so the target of $f$ can be identified with $\RR^g/\ZZ^g$, the real torus of dimension $g$.

We now extend $f$ by linearity to any divisor, and denote $f_d$  its restriction to divisors of degree $d$. The proof of Theorem \ref{thm:JacobianStructure}, therefore, consists of the following three ingredients. 
\begin{enumerate}
\item The map $f_0$ is well defined on divisor classes. 
\item The map $f_0$ is injective.
\item The map $f_0$ is surjective. 
\end{enumerate}

\noindent Note that $f_0$ maps  principal divisors to $0\in\RR^g/\ZZ^g$. Similarly, the divisors sent to $0$ by   $f_d$ are precisely those that are equivalent to $d\cdot p$.

\begin{example}
Let $\Gamma$ be the theta graph as in Figure \ref{fig:binary}. Denote the left vertex by $p$,  label the edges $e_1,e_2,e_3$ oriented away from $p$, and let $\ell_1,\ell_2,\ell_3$ be their lengths. Now choose a basis $\sigma_1,\sigma_2$ where $\sigma_1 = e_1 - e_2$ and $\sigma_2 = e_2-e_3$. The oriented intersection between the basis elements is 
\[
\sigma_1\cdot\sigma_1 = \ell_1 + \ell_2,
\]
\[
\sigma_1\cdot\sigma_2 = -\ell_2,
\]
\[
\sigma_2\cdot\sigma_1 = -\ell_2,
\]
\[
\sigma_2\cdot\sigma_2 = \ell_2 + \ell_3,
\]
so 
\[
\Jac(\Gamma) \simeq \RR^2/\langle(\ell_1+\ell_2,-\ell_2),(-\ell_2,\ell_2+\ell_3)\rangle.
\]
For instance, suppose that $x_1$ and $x_2$ are points on $e_1$ and $e_2$ at distance $t_1$ and $t_2$ from $p$ respectively, and let  $D=x_1+x_2$. Choosing a path from $p$ to $x_1$ along $e_1$ and from $p$ to $x_2$ along $e_2$, the divisor $D$ corresponds to the point $(t_1-t_2,t_2)$. If, on the other hand, we choose the path to $x_2$ along the edge $e_1$ then the divisor corresponds to $(t_1+\ell_1+(t_2-\ell_2), \ell_2-t_2)$. There's no need to worry though:   those two points coincide in the quotient since their difference is $(\ell_1 + \ell_2,-\ell_2)$, so the image in $\RR^2/\ZZ^2$ did not depend on the chosen paths. 
\end{example}

\begin{example}
Let $\Gamma$ be the peace-sign graph as in Figure \ref{fig:piecewise_linear} with all edge lengths equal to $1$. Choose a basis $\sigma_1,\sigma_2,\sigma_3$ where $\sigma_1$ is the path going from $\delta$ to $\alpha$, then to $\beta$ and back to $\delta$, $\sigma_2$ is going from $\delta$ to $\beta$ to $\gamma$ and back to $\delta$, and $\sigma_3$ is going from $\delta$ to $\gamma$ to $\alpha$ and back to $\delta$. 

Then the Jacobian is identified with $\RR^3/\langle u_1,u_2,u_3\rangle$, where
\[
u_1 = (3,-1,-1),
\]
\[
u_2 = (-1,3,-1),
\]
and
\[
u_3 = (-1,-,1,3). 
\]
If $x_1,x_2,x_3$ are points at distances $t_1,t_2,t_3$ respectively on the edges between $\delta$ and $\alpha,\beta,\gamma$, then the divisor $D=x_1+x_2+x_3$ corresponds to the point $(x_1-x_2,x_2-x_3,x_3-x_1)$. In the special case where $x_1=x_2=x_3$ we get $0\in\RR^3/\ZZ^3$. That is to be expected since because this divisor is equivalent to $3\delta$.
\end{example}

\begin{remark}\label{rem:Torelli}
From a topological perspective, the Jacobians of different graphs of genus $g$ are indistinguishable. To that end, we decorate the Jacobian with additional data known as a \emph{polarization}. There are several equivalent ways to define the polarization, one of which is to keep track of the \emph{theta divisor} which consists of the classes of effective divisors of degree $g-1$ (or equivalently, the image of $\Phi^{g-1}$). With this data, the Jacobian becomes a principally polarized tropical abelian variety, or PPTAV for a simple acronym that rolls off the tongue. 
We therefore have a map, known as the tropical \emph{Torelli} map,
\[
\mathcal{M}_g^{\trop} \to \mathcal{A}^{\trop}_{g}
\]
from the moduli space of tropical curves of genus $g$ to the moduli space of PPTAVs \cite{BrannettiMeloViviani_TropicalPrym}. Unlike its algebraic analogue which is injective everywhere, the  tropical Torelli map is only injective on the locus of $3$-connected graphs (see \cite{CaporasoViviani_Torelli} for additional information). 
\end{remark}

\section{Double covers and Prym varieties}\label{sec:PrymVarieties}
Prym varieties  are abelian varieties that, similarly to Jacobians, are associated with algebraic curves or metric graphs. The key difference is that they are associated with a double cover of graphs/curves rather than  a single one. 

\subsection{Harmonic covers}
We first need to discuss the tropical analogue of \'etale double covers. 

\begin{definition}
A local isometry  $\pi:\widetilde\Gamma\to\Gamma$ is called a \emph{free cover} of degree $d$ if the preimage  of every $x\in\Gamma$ consists of $d$ distinct points of $\widetilde\Gamma$. A free  cover of degree 2 is called a \emph{free double cover}. Every double cover is equipped with a non-trivial involution $\iota$ that swaps the fibres, namely  the unique map such that $\pi(x) = \iota\pi(x)$.
\end{definition}

Every free double cover of graphs is the tropicalization of an étale double cover of curves, but the converse is not true:  étale double covers can tropicalize to maps  known as \emph{harmonic double cover}.  For the sake of brevity we will restrict ourselves to free double covers which already capture much of the beautiful aspects of the theory and are, handily, already useful in applications.

\begin{example}
Figure \ref{fig:covers} shows two free double covers of the dumbbell graphs and a free double cover of the theta graph. As we shall  soon see, these graphs admit additional  free double covers.

\begin{figure}
     \centering
     \begin{subfigure}[b]{0.4\textwidth}
         \centering
         \includegraphics[width=\textwidth]{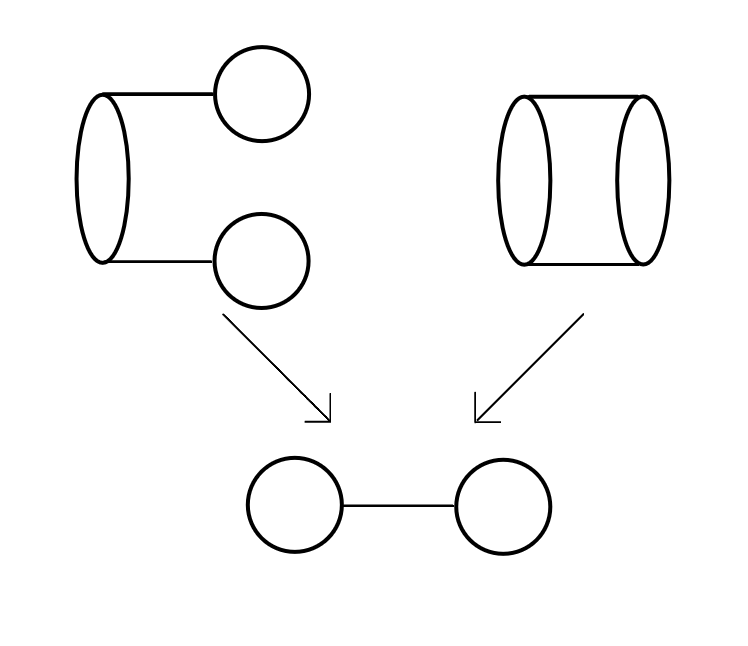}
         \caption{Two double covers of the dumbbell graph.}
         \label{fig:dumbbellCovers}
     \end{subfigure}
     \hfill
     \begin{subfigure}[b]{0.4\textwidth}
         \centering
         \includegraphics[width=0.4\textwidth]{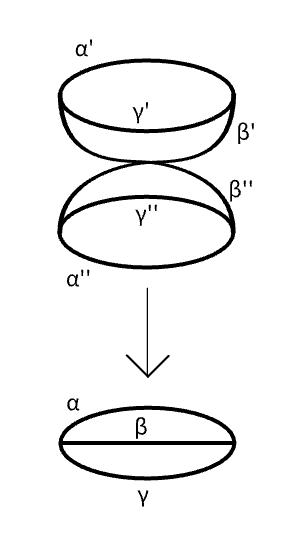}
         \caption{A double cover of the theta graph. Note that the edges $\beta'$ and $\beta''$ don't actually meet.}
         \label{fig:thetaCovers}
     \end{subfigure}
     \hfill
        \caption{Examples of free double covers}
        \label{fig:covers}
\end{figure}

\end{example}

\begin{example}\label{ex:doubleCoverCycle}
If $\Gamma$ is a cycle of length $1$ then its only connected free double cover is given by $\Gamma$ itself. If we  identify $\Gamma$ with $\RR/\simeq$, where $\theta_1\simeq\theta_2$ whenever $\theta_1-\theta_2$ is an integer, then the double cover is given by $\pi(\theta) = 2\theta$. 
\end{example}

\subsubsection{How to construct all free double covers?}

Given a metric graph $\Gamma$, begin by fixing a spanning tree $T$. Any free double cover must contain two disjoint copies of $T$, call them $T^+$ and $T^-$. Now, let $e$ be an edge of $\Gamma$ in the complement of $T$, and denote its endpoints  $v_1$ and $v_2$. Denote $v_i^{+}$ and $v_i^{-}$ the corresponding vertices in $T^+$ and $T^-$ respectively. Any double cover of $\Gamma$ must include two copies of $e$, and each copy must connect  a preimage of $v_1$ with a preimage of $v_2$. Therefore, $\widetilde\Gamma$ will either include  edges between $v_1^{+}$ and $v_2^{+}$ and between $v_1^{-}$ and $v_2^{-}$ or edges between $v_1^{+}$ and $v_2^{-}$ and between $v_1^{-}$ and $v_2^{+}$. 
 Now choose one of those two possible lifts
for each edge $e$ in $\Gamma\setminus T$ to obtain a double cover.

If the genus of $\Gamma$ is $g$ then the complement of every spanning tree consists of $g$ edges, so there are $2^g$ such double covers. Note that every double cover is constructed this way, and starting from a single spanning tree is sufficient to describe all the free double covers of a given graph.

\begin{example}
The bottom of Figure \ref{fig:spanningTrees} shows a spanning tree for the dumbbell graph (with the edges in the complement grayed out) and the top shows the two copies $T^+$ and $T^-$. In order to get a double cover, we need to choose lifts $\wt e_1, \wt e_2$ and $\wt e'_1, \wt e'_2$ for the edges $e$ and $e'$. For instance, if we choose  $\wt e_1 = [u^+,v^-], \wt e_2 = [u^-, v^+], \wt e'_1 = [w^+,z^-] , \wt e'_2 = [w^-, z^+]$ we obtain the graph seen on the right side of Figure  \ref{fig:dumbbellCovers}. If, on the other hand, we choose $\wt e_1 = [u^+,v^-], \wt e_2 = [u^-, v^+], \wt e'_1 = [w^+,z^+] , \wt e'_2 = [w^-, z^-]$, we obtain the graph seen on the left  side of the same figure.

\begin{figure}
    \centering
    \includegraphics[width=0.4\textwidth]{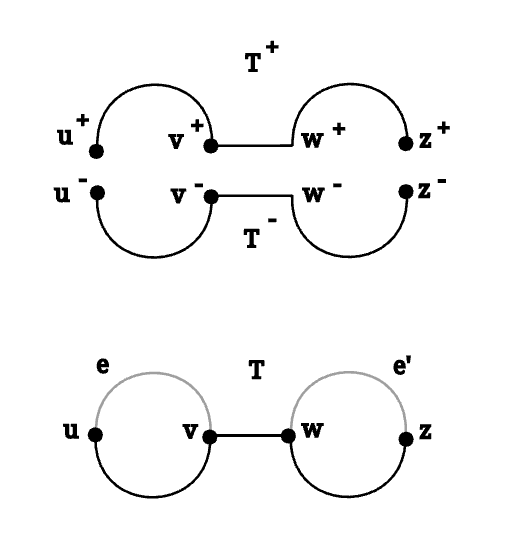}
    \caption{Spanning trees for the dumbbell graph and its double covers}
    \label{fig:spanningTrees}
\end{figure}
\end{example}

We finish this subsection with two important facts. 
Here is an important fact that you will prove in Exercise \ref{sec:PrymExercises}.\ref{exer:genusDoubleCover}. 
\begin{lemma}\label{lem:genusOfCovers}
The genus of $\widetilde\Gamma$ is $2g-1$. 
\end{lemma}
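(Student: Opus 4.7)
The plan is to apply the genus formula \eqref{eq:genus} to $\widetilde\Gamma$, after counting its edges and vertices in terms of those of $\Gamma$. Pick a model $G$ for $\Gamma$ with $v$ vertices and $e$ edges, so that $g = e - v + 1$ since $\Gamma$ is connected. The free double cover $\pi$ lifts $G$ to a natural model $\widetilde G$ of $\widetilde\Gamma$: because $\pi$ is a local isometry and every point of $\Gamma$ has exactly two preimages, each vertex of $G$ contributes two vertices to $\widetilde G$ and each edge of $G$ contributes two edges. Hence $\widetilde G$ has $2v$ vertices and $2e$ edges.

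Next I would address the number of connected components of $\widetilde\Gamma$. Implicit in the lemma (and consistent with the construction given just above it, where one starts from a spanning tree $T$ with lifts $T^+, T^-$ and then chooses independent lifts for each edge outside $T$) is that we take $\widetilde\Gamma$ to be connected; otherwise the only alternative for a free double cover is the disconnected one consisting of two disjoint copies of $\Gamma$, which arises when every edge in $\Gamma \setminus T$ is lifted ``trivially''. Assuming $\widetilde\Gamma$ is connected, we have $f(\widetilde\Gamma) = 1$.

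Plugging into \eqref{eq:genus},
\[
g(\widetilde\Gamma) = 2e - 2v + 1 = 2(e - v + 1) - 1 = 2g - 1,
\]
which is the desired formula.

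I do not expect a genuine obstacle here. The only subtle point is the connectivity assumption, and the most natural way to justify it is to observe from the construction of double covers that disconnectedness forces $\widetilde\Gamma$ to be the disjoint union $\Gamma \sqcup \Gamma$; ruling this trivial case out (as is standard when speaking of \emph{the} free double cover in the context of Prym varieties) leaves exactly the computation above.
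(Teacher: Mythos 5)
Your argument is correct: lifting a model of $\Gamma$ to the free double cover doubles both the edge and vertex counts, and the formula $g(\widetilde\Gamma)=2e-2v+1=2g-1$ follows from \eqref{eq:genus} once $\widetilde\Gamma$ is known to be connected, a point you rightly flag and correctly resolve (a disconnected free double cover can only be the trivial one $\Gamma\sqcup\Gamma$). The paper itself gives no written proof --- it defers the statement to Exercise \ref{sec:PrymExercises}.\ref{exer:genusDoubleCover} --- but your Euler-characteristic count is plainly the intended argument, so there is nothing to add.
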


\bigskip 

\subsection{Prym varieties}\label{sec:Pryms}
Let
\[
\pi:\widetilde\Gamma\to\Gamma
\]
be a free double cover. Then we have an induced map on divisors  which sends  $\sum_i a_i y_i$ to $\sum_i a_i \pi(y_i)$. This map is well defined on divisor classes (that is, if $\wt D$ and $\wt D'$ are equivalent then so are their images) and therefore induces a surjective map between Jacobians
\[
\pi_*:\Jac(\widetilde\Gamma)\to\Jac(\Gamma).
\]
The map $\pi_*$ is known as the \emph{norm} map. 

From Theorem \ref{thm:JacobianStructure} and Lemma \ref{lem:genusOfCovers}, we know that $\Jac(\widetilde\Gamma)\simeq \RR^{2g-1}/\ZZ^{2g-1}$ and $\Jac(\Gamma)\simeq \RR^{g}/\ZZ^{g}$. The kernel of the norm map is a subgroup of $\Jac(\widetilde\Gamma)$. It should  be plausible that the kernel has dimension $g-1$. In fact, it turns out that the kernel consists of  two connected components, each of which is homeomorphic to $\RR^{g-1}/\ZZ^{g-1}$. 

\begin{definition}\label{def:Prym}
Each connected component of   $\ker(\pi_*)$ is referred to as a \emph{Prym variety}. The component that contains $0$ is a subgroup of the Jacobian called the \emph{even} Prym variety, denoted  $\Prym_0(\widetilde\Gamma/\Gamma)$. The other connected component is  called the \emph{odd} Prym variety and is denoted $\Prym_1(\widetilde\Gamma/\Gamma)$.
\end{definition}

The names even and odd will be justified in Lemma \ref{lem:evenOdd} below. 
We will occasionally use $\Prym(\widetilde\Gamma/\Gamma)$ to denote either the even or odd Prym variety when the sign is known from context. We use the term \emph{Prym divisor} to refer to divisors in the kernel of the norm map. 
In the literature, the term Prym variety often refers strictly to $\Prym_0(\widetilde\Gamma/\Gamma)$. However, for our purposes it will be convenient to be more agnostic with those terms.

\begin{example}\label{ex:somePrymdivisors}
 Consider the double cover $\pi:\wt\Gamma\to\Gamma$ as in the right hand side of Figure \ref{fig:dumbbellCovers}.
 For any point $p\in\wt\Gamma$, the divisor $p-\iota p$ is clearly a Prym divisor.  In general, $D-\iota D$ is a Prym divisor for any divisor $D$ on any  free double cover. But not all Prym divisors are of this form. For instance, if  $p$ and $q$ are any points such that $\pi(p)$ and $\pi(q)$ belong to the bridge of $\Gamma$ then $p-q, p-\iota q, \iota q - p, \iota p - \iota q$ are all Prym divisors since their norms are equivalent to 0.
\end{example}

\begin{example}
Let  $\pi:\wt\Gamma\to\Gamma$ be the double cover of the cycle by itself as in Example \ref{ex:doubleCoverCycle}. It is straightforward to check that every divisor of the form $p_1 + p_2 + \cdots p_{2k} - \iota(p_1 + p_2 + \cdots p_{2k})$ is equivalent to $0$. On the other hand, for any point $p$, the divisor $p-\iota p$ is equivalent to the divisor $\eta$ with a chip at $0$ and an anti-chip at $\frac{1}{2}$. It follows that every divisor of the form $p_1 + p_2 + \cdots p_{2k+1} - \iota(p_1 + p_2 + \cdots p_{2k+1})$ is equivalent to $\eta$. 

\end{example}

As we saw in the examples above, if we place chips and anti-chips at points that are swapped by the involution we get a divisor in the kernel of the norm map. 
In other words, every divisor of the form $E-\iota(E)$ is in the kernel.  We refer to a divisor of that form as \emph{anti-symmetric}. 
As exhibited in Example \ref{ex:somePrymdivisors}, not all Prym divisors have this form.  However, every Prym divisor is equivalent to an anti-symmetric divisor.

\begin{lemma}\label{lem:antiSymmetric}
Suppose that $\pi_*(\wt D)\simeq 0$. Then $\wt D\simeq E-\iota(E)$ where $E$ is some effective divisor. \end{lemma}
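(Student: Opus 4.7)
The plan is to first show $\wt D + \iota\wt D \simeq 0$ via the identity $\pi^*\pi_* = \mathrm{id} + \iota$, then produce an explicit effective divisor $F$ with $F - \iota F \simeq 2\wt D$, and finally close the factor-of-$2$ gap by a case analysis on the component of $[\wt D]$ inside $\Prym$.

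First, I would write $\pi_*(\wt D) = \ddiv \varphi$ for a piecewise-linear function $\varphi$ on $\Gamma$ with integer slopes. Since $\pi$ is a local isometry, the pullback $\varphi \circ \pi$ is piecewise-linear on $\wt\Gamma$ with integer slopes, and its divisor equals $\pi^*(\ddiv \varphi) = \pi^*\pi_*\wt D$. For a free double cover the pullback of a point $y \in \Gamma$ is the sum $\wt y + \iota\wt y$ of its two preimages, so $\pi^*\pi_*\wt D = \wt D + \iota\wt D$. This gives $\wt D + \iota\wt D \simeq 0$, equivalently $\wt D \simeq -\iota\wt D$.

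Next, I would decompose $\wt D = \wt A - \wt B$ as a difference of disjoint effective divisors and set $F := \wt A + \iota\wt B$, which is effective. A direct computation gives
\[
F - \iota F \;=\; (\wt A - \wt B) - (\iota\wt A - \iota\wt B) \;=\; \wt D - \iota\wt D \;\simeq\; 2\wt D,
\]
using $\iota\wt D \simeq -\wt D$ from the previous step. So $F - \iota F$ is an anti-symmetric divisor of the desired form whose class is $2[\wt D]$.

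The main obstacle is passing from $2\wt D$ to $\wt D$. If $[\wt D]\in\Prym_0$, then since $\Prym_0$ is a real torus and multiplication by $2$ on a real torus is surjective, there exists $[\wt D']\in\Prym_0$ with $2[\wt D']=[\wt D]$; applying the construction of the previous paragraph to a representative of $\wt D'$ produces an effective $E$ with $E - \iota E \simeq 2\wt D' \simeq \wt D$. If $[\wt D]\in\Prym_1$, pick any point $p\in\wt\Gamma$ and observe that $[p-\iota p]\in\Prym_1$ for a connected free double cover (the relation $p\simeq\iota p$ would demand a piecewise-linear integer-slope function moving a single chip between the two sheets, which one can rule out directly from the combinatorics of the cover). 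Then $[\wt D]-[p-\iota p]\in\Prym_0$, the first case furnishes an effective $E'$ with $E' - \iota E' \simeq \wt D - (p - \iota p)$, and $E := E' + p$ is effective with $E - \iota E \simeq \wt D$.
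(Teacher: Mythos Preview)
Your route is genuinely different from the paper's. The paper never invokes the torus or two-component structure of $\Ker\pi_*$; it constructs an explicit piecewise-linear $\wt\varphi$ on $\wt\Gamma$ with $\wt D+\ddiv\wt\varphi$ literally anti-symmetric. After refining the model so that $\varphi$ is linear on each edge, one sets $\wt\varphi=\varphi/2$ at lifted vertices, and on each pair $e',e''$ lying over an edge where $\varphi$ has slope $d$ one gives $\wt\varphi$ slope $d$ on the first half of $e'$ and the second half of $e''$, slope $0$ on the remaining halves. Checking that $\wt D+\ddiv\wt\varphi$ is anti-symmetric is then a local computation. This is elementary and completely self-contained.

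Your Steps~1 and~2 are correct, and together with divisibility of the real torus $\Prym_0$ they do handle the even component. The gap is the odd case in Step~3. You need $[p-\iota p]\in\Prym_1$, but the argument you sketch (``$p\simeq\iota p$ would require moving a single chip between the sheets'') only attempts to establish $[p-\iota p]\neq 0$; a nonzero class can perfectly well lie in $\Prym_0$, so this does not place $[p-\iota p]$ in the non-identity component. In the paper's logical order, the parity characterization of the two components---that $[E-\iota E]$ lies in $\Prym_{\deg E\bmod 2}$---is Lemma~\ref{lem:evenOdd}, and its proof \emph{uses} Lemma~\ref{lem:antiSymmetric}. Invoking that parity here is therefore circular, and without it your odd case does not close. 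The paper's direct lift of $\varphi$ sidesteps the whole issue.
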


\begin{proof}
This is a sketch of the proof, you will complete the missing details in the exercise \ref{sec:PrymExercises}.\ref{exer:antiSymmetric}.
Suppose that $D:= \pi_*(\wt D)\simeq 0$. Then there is a piecewise linear function $\varphi$ such that $D + \ddiv\varphi = 0$. Define a piecewise linear function $\widetilde\varphi:\widetilde\Gamma\to\RR$  as follows.  
First, choose a model for $\Gamma$ where the slopes of $\varphi$ are  constant  along each edge. At every vertex $v$, set $\widetilde\varphi(v) = \frac{\varphi(v)}{2}$. Let $e$ be an edge of $\Gamma$ with endpoints $u$ and $w$ and let $e',e''$ be the preimages of $e$ with endpoints $u',w'$ and $u'',w''$ respectively. If the slope of $\varphi$  on $e$ in the direction from $u$ to $w$ is $d$, 
define $\wt\varphi$ on $e'$ by letting it have slope $d$ from $u'$ to the midpoint of the edge and slope $0$ from the midpoint to $w'$. Similarly, let $\wt\varphi$ have slope $0$ between $u''$ and the midpoint and slope $d$ between the midpoint and $w''$. 

Now, check that the construction gives rise to a  well-defined and continuous piecewise linear  function and that, moreover, $\wt D+\ddiv(\wt\varphi)$ is anti-symmetric.


\end{proof}

Furthermore,
\begin{lemma}\label{lem:evenOdd}
The even (resp. odd) Prym variety consists of  divisor classes of the form $[E-\iota E]$ where $E$ has even (resp. odd) degree. 
\end{lemma}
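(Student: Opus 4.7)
The plan is to define two subsets $P_0, P_1 \subseteq \ker(\pi_*)$ indexed by parity, and then identify them with the two connected components of $\ker(\pi_*)$ whose existence was asserted just before Definition \ref{def:Prym}. Concretely, set
\[
P_i \;=\; \bigl\{\,[E - \iota E] \in \Jac(\widetilde\Gamma) \;:\; E \in \Div(\widetilde\Gamma),\; E\geq 0,\; \deg E \equiv i \pmod{2}\,\bigr\}
\]
for $i\in\{0,1\}$. By Lemma \ref{lem:antiSymmetric} every Prym divisor class is equivalent to one of the form $E - \iota E$ with $E$ effective, so $P_0\cup P_1 = \ker(\pi_*)$. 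The goal is to prove that $P_0$ is the component containing $0$ and $P_1$ is the other.

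First I would record the algebraic structure. The identity $(E-\iota E) + (E'-\iota E') = (E+E') - \iota(E+E')$ together with $-(E-\iota E) = (\iota E) - \iota(\iota E)$ and $\deg(\iota E) = \deg E$ show that $P_0$ is closed under addition and inversion and contains $0$, so $P_0$ is a subgroup of $\Jac(\widetilde\Gamma)$. That $P_0 \subseteq \ker(\pi_*)$ is immediate because $\pi_*(E) = \pi_*(\iota E)$. For any point $p \in \widetilde\Gamma$ we have $[p - \iota p] \in P_1$, and adding any element of $P_0$ to it keeps us in $P_1$; conversely any element of $P_1$ differs from $[p-\iota p]$ by an element of $P_0$. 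Hence $P_1 = [p - \iota p] + P_0$ is a coset.

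Next I would establish that each $P_i$ is connected. For every $k$, the map
\[
\Sym^k(\widetilde\Gamma) \longrightarrow \Jac(\widetilde\Gamma), \qquad E \longmapsto [E - \iota E]
\]
is continuous, and $\Sym^k(\widetilde\Gamma)$ is connected because $\widetilde\Gamma$ is. Moreover the identity $(E + p + \iota p) - \iota(E + p + \iota p) = E - \iota E$ shows that the image of $\Sym^{k}$ is contained in the image of $\Sym^{k+2}$, so $P_0 = \bigcup_{k} \mathrm{image}(\Sym^{2k})$ is an increasing union of connected sets all containing $[0]$, and is therefore connected; the same argument applied to the tower starting with $\Sym^1$ (whose images all contain $[p-\iota p]$) shows that $P_1$ is connected.

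Finally, since $P_0$ is a subgroup and $P_1$ is a coset, either $P_0 = P_1$ or $P_0 \cap P_1 = \emptyset$. In the first case $\ker(\pi_*) = P_0$ would be connected, contradicting the already-known fact that $\ker(\pi_*)$ has exactly two components. Hence $P_0 \cap P_1 = \emptyset$, and by connectedness $P_0$ and $P_1$ are precisely the two components of $\ker(\pi_*)$. Since $0 \in P_0$, we conclude $P_0 = \Prym_0(\widetilde\Gamma/\Gamma)$ and $P_1 = \Prym_1(\widetilde\Gamma/\Gamma)$. The main obstacle is the disjointness step: the argument leans crucially on the fact that $\ker(\pi_*)$ has exactly two components. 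A self-contained proof that $\deg E \pmod 2$ is independent of the representative $E - \iota E$ of a given class would require analyzing an anti-invariant piecewise linear function $\widetilde\varphi$ with $\ddiv(\widetilde\varphi) = F - \iota F$ (obtained by averaging $\widetilde\varphi$ against $\widetilde\varphi\circ\iota$) and extracting the parity of $\deg F$ from its combinatorics, a step I sidestep by appealing to the two-component structure.
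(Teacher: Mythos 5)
Your proposal is correct and follows essentially the same route as the paper's sketch: both reduce to anti-symmetric representatives via Lemma \ref{lem:antiSymmetric}, show that each parity class is connected, and then lean on the (asserted, not proved) fact that $\ker(\pi_*)$ has exactly two components to get disjointness — a dependence you flag just as honestly as the paper does. The only cosmetic differences are that you package connectivity as continuity of $E\mapsto[E-\iota E]$ on the connected spaces $\Sym^k(\widetilde\Gamma)$ together with the increasing-union trick, where the paper instead moves pairs of chips anti-symmetrically and inducts on $\deg F$, and that your subgroup/coset observation for $P_0$ and $P_1$ makes the final "equal or disjoint" dichotomy slightly crisper.
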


\begin{proof} (sketch)
 Let $P$ be a Prym divisor.
 By Lemma \ref{lem:antiSymmetric}, we can assume that $P = F-\iota F$ for some effective divisor $F$. We need to show that $P$ is in the even Prym variety if and only if $\deg(F)$ is even. Assume first that the degree of $F$ is even. 
 If $\deg(F) = 0$ then $P$ is just $0$ and is clearly in the even Prym variety. Otherwise, $\deg(F) \geq 2$. Let $p_1,p_2$ be points in the support of $F$. Then by continuously moving the chip at $p_1$  along with the anti-chip at $\iota p_1$, we obtain a 1-parameter family of  (not necessarily equivalent) divisors (this last bit is what needs  to be made precise in order to upgrade the argument from a `sketch' to a `proof'). We move those chips in an anti-symmetric fashion to make sure that the family remains within the even Prym variety. We continue to deform the divisor until  the chip from $p_1$ reaches $\iota p_2$  and the anti-chip from $\iota p_1$ reaches $p_2$. The resulting divisor is $E-\iota E$, where $E = F - p_1 - p_2$. 
We have thus found a path from $P$ to $e-\iota E$ where $\deg(E) = \deg F-2$. By induction, there is a path from $P$ to $0$ that lies in the even Prym variety.

A similar argument shows that all the divisors of the form $F-\iota F$, where $F$ is an effective divisor of odd degree belong to the same connected component. 
Since $\Ker\pi_*$ consists of exactly two connected components, and each element of the kernel is equivalent  to a divisor of the form $E-\iota E$, those two components must be distinct and determined by the parity of the degree of $E$. 
\end{proof}

\begin{example}
In Exercise \ref{sec:PrymExercises}\ref{exer:cycle}, you will confirm that the Prym variety corresponding to  the double cover of the cycle by itself consists of two points indexed by the parity.

Now consider the divisor $D = x+y-p-q$ seen in the double cover of the dumbbell  in Figure \ref{fig:dumbbellNonSymmetric}. Then $\pi_*(D)\simeq 0$ although $D$ is not anti-symmetric. On the other hand, by moving the chips at $x$ and $y$ in unison until they reach $p$, we obtain the anti-symmetric divisor $p-q$, since $q=\iota p$. Note that this is an odd Prym divisor. In particular, if we try to move chips around while maintaining anti-symmetry, we will never reach the 0 divisor.  

\begin{figure}
    \centering
    \includegraphics[width=0.3\textwidth]{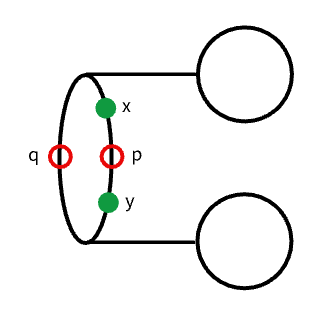}
    \caption{A Prym divisor that is not anti-symmetric}
    \label{fig:dumbbellNonSymmetric}
\end{figure}
\end{example}

Note that the even Prym variety is a group whereas the odd Prym variety is not. However,  fixing a point $p\in\widetilde\Gamma$, we have a bijection between $\Prym_0(\widetilde\Gamma/\Gamma)$ and $\Prym_1(\widetilde\Gamma/\Gamma)$ by mapping $D$ to $D+(p-\iota p)$. 
This motivates the following definition. 

\begin{definition}
Let $\pi:\widetilde\Gamma\to\Gamma$ be a double cover of metric graphs. The \emph{Abel--Prym} map of degree $d$ is the map
\[
\Psi_d:\Sym^d(\widetilde\Gamma)\to\Prym_\varepsilon(\widetilde\Gamma/\Gamma)
\]
(where $\varepsilon\equiv d\pmod 2$)
given by $\Psi_d(p_1+\cdots +p_d) = [p_1+\cdots + p_d - \iota p_1 - \ldots - \iota p_d]$. We often omit the degree when known from context and refer to $\Psi$ simply as the Abel--Prym map. 
\end{definition}

In other words, $\Psi$ is the Prym-theoretic version of the Abel--Jacobi map. 
From Lemma \ref{lem:antiSymmetric}, every element of the Prym variety is in the image of an  Abel--Prym map of some degree. Since the dimension of the Prym variety is $g-1$, it seems reasonable to believe that the Abel--Prym map of degree $g-1$ will be surjective. 
This turns out to be correct and will be discussed in the next section.

\begin{remark}\label{rem:PrymTorelli}
Prym varieties can be assigned a principal polarization making them a PPTAV (see Remark \ref{rem:Torelli}). 
We therefore obtain a map 
\[
\mathcal{R}_g^{\trop} \to \mathcal{A}^{\trop}_{g-1}
\]
from the moduli space of harmonic double covers to the moduli space of principally polarized tropical abelian varieties of dimension $g-1$. The index $g$ in $\mathcal{R}_g^{\trop}$ stands for the fact that the graphs being covered have genus $g$. This map is known as the \emph{tropical Prym--Torelli} map. 
In algebraic geometry, it is known that the closure of the image of the Prym--Torelli map  contains the image of the usual Torelli map. That is, every Jacobian is  the limit of Prym varieties. A stronger property holds in tropical geometry: every tropical Jacobian is a tropical Prym variety. A sketch of the proof is given in Exercise \ref{sec:PrymExercises}.\ref{exer:PrymSurjective}.

Unlike the standard Torelli map, the  Prym--Torelli map does not extend to the boundary of the moduli space \cite{FriedmanSmith_Prym, ABH_PrymDegenerations, CGHR_Prym}. In fact, finding reasonable compactifications of 
$\mathcal{R}_g$ and $ \mathcal{A}_{g-1}$ such that the Prym--Torelli map extends is a major open problem. On the other hand, it is worth noting  the following remarkable special case. When $g=6$, the Abel--Prym map is dominant onto its image and finite. The degree of the map is $27$ 
and there is a natural correspondence between the fibre and lines on a cubic! (see \cite{Donagi_Tetragonal} for this beautiful construction and \cite{RoehleZakharov_ngonal} for ongoing work towards a tropical version). 

\end{remark}

\begin{remark}
Prym varieties behave well under tropicalization \cite[Theorem A]{Len_Ulirsch_Skeletons}. That is, if $\pi:\wt\Gamma\to\Gamma$ is the tropicalization of an \'etale double cover $f:\wt X\to X$ of curves, then 
\[
\Trop(\Prym_0(\wt X/ X)) = \Prym_0(\wt\Gamma / \Gamma). 
\]

\end{remark}

\subsection{Prym--Brill--Noether theory}
Just as Brill--Noether theory studies special subvarieties in the Jacobian, its Prym theoretic version studies subvarieties in the Prym variety.  However, since it doesn't makes sense to talk about the rank of divisors of degree $0$, the we work in a certain natural translation of the Prym variety.  

\begin{definition}
Let $f:\wt{X}\to X$ be a double cover of metric graph or an algebraic curve and fix an integer $r$. The \emph{Prym--Brill--Noether variety} of $X$, is

\[
V^r(X,f) = \{[D]\in\Jac([\wt X]\, \big\vert\, f_*(D) = K_{X}, r(D)\geq r, r(D)\equiv r(\mod 2)\}.
\]
\end{definition}

The central question in Prym--Brill--Noether theory is for the dimension of this variety. This is known for curves that are general in moduli \cite{Bertram_existenceforPrymspecialdivisors, Welters_GiesekerPetri}. 
\begin{theorem}
Let $r$ and $g$ be integers such that $g-1-\binom{r+1}{2} \geq 0$. If the Prym--Brill--Noether variety is non-empty then 
\[
\dim V^r(X,f) \geq g-1-\binom{r+1}{2}.
\]
Conversely, if $f$ is general in the moduli of unramified double cover, then $\dim V^r(X,f)= g-1-\binom{r+1}{2}$.
\end{theorem}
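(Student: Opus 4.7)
The plan mirrors the proof of the classical Brill--Noether theorem. First I would establish the lower bound, which is a universal statement holding on every cover, by expressing $V^r(X,f)$ as a degeneracy locus inside a translate of the Prym variety. Then I would prove the sharp dimension statement for a general $f$ by exhibiting a single explicit example achieving the expected dimension, and invoke semicontinuity to propagate sharpness across the moduli space of double covers.

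For the lower bound, my approach is to cut out $V^r(X,f)$ as a determinantal-type locus inside the translate of $\Prym_\varepsilon(\wt X/X)$ carved out by $f_*(D)=K_X$. The rank condition $r(D)\geq r$ is imposed by requiring a certain evaluation-of-sections map of vector bundles, restricted to the anti-invariant part under $\iota$, to drop rank by at least $r+1$. Because the involution forces this map to be antisymmetric, the expected codimension of the locus where its rank drops by $r+1$ is $\binom{r+1}{2}$, rather than the $(r+1)^2$ one would get for a generic matrix or $\binom{r+2}{2}$ for a symmetric one. A Chern class computation on a desingularization of this antisymmetric degeneracy locus then yields $\dim V^r(X,f) \geq g-1-\binom{r+1}{2}$ whenever the right-hand side is non-negative, establishing the first half of the theorem.

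For the upper bound, following the tropical strategy of Cools--Draisma--Payne--Robeva for the ordinary Brill--Noether theorem and its Prym analogue, I would construct a specific tropical double cover $\pi:\wt\Gamma \to \Gamma$ for which the Prym--Brill--Noether locus can be computed directly. A natural candidate is a carefully chosen free double cover of a chain of $g$ loops with generic edge lengths, whose combinatorial structure permits an enumeration of Prym divisor classes via reduced divisors and Dhar's algorithm run in an $\iota$-equivariant manner. Combined with a Prym version of Baker's specialization lemma, which is available thanks to the compatibility $\Trop(\Prym_0(\wt X/X)) = \Prym_0(\wt\Gamma/\Gamma)$ recorded in the previous section, this gives $\dim V^r(X,f) \leq g-1-\binom{r+1}{2}$ on a general algebraic double cover, matching the lower bound.

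The hardest step is the upper bound. On the tropical side, selecting the right combinatorial double cover is delicate: naive choices of $\pi$ produce Prym--Brill--Noether loci of too-large dimension, mirroring the Jensen phenomenon for ordinary Brill--Noether where the expected-dimension property fails on an open set in $\mathcal{M}_g^{\trop}$. Running Dhar's algorithm compatibly with $\iota$ demands care, since each burning step must respect the involution and the antisymmetry of Prym divisors. Finally, the specialization step requires the stronger statement that $\Trop$ sends $V^r(X,f)$ into the tropical Prym--Brill--Noether locus with dimensions controlled; making this rigorous is where most of the transversality work of Bertram and Welters is hidden, and a fully tropical treatment would have to encode their deformation-theoretic input in purely combinatorial terms on $\wt\Gamma$.
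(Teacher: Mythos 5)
The paper does not prove this theorem: it is quoted from the literature, with the lower bound and the classical generic-dimension statement attributed to Bertram and Welters, and a remark that the converse can be recovered by tropical techniques as in \cite{Len_Ulirsch_Skeletons} and \cite{CLRW_PBN}. So there is no in-paper proof to compare against; what you have written is a correct identification of the two strategies used in those references, but it is a plan rather than a proof, and each half hides a substantial gap.

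For the lower bound, the essential input you are assuming is that the evaluation map whose degeneracy locus cuts out $V^r(X,f)$ can actually be given a skew-symmetric structure. This is not automatic from the presence of the involution $\iota$: it is a theorem going back to Mumford's work on the parity of $h^0$ for line bundles with $\Nm(L)=\omega_X$ (the same circle of ideas as theta characteristics), and it is precisely why the parity condition $r(D)\equiv r \pmod 2$ appears in the definition of $V^r$. Without establishing that local skew-symmetric model, the codimension bound $\binom{r+1}{2}$ has no justification. Also, since the statement is conditional on non-emptiness, the dimension bound follows from the purely local fact that a corank-$(r+1)$ locus of a skew-symmetric bundle map has codimension at most $\binom{r+1}{2}$ on every non-empty component; no Chern class computation or desingularization is needed for that direction (Chern classes enter only if one wants to prove non-emptiness, which the statement does not assert). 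For the upper bound, your outline defers the entire content to ``a carefully chosen free double cover of a chain of loops'' on which the Prym--Brill--Noether locus ``can be computed directly.'' That computation --- identifying the right cover (a folded chain of loops), classifying the relevant Prym divisor classes combinatorially, and verifying the dimension count --- is the bulk of the work in \cite{CLRW_PBN}, and the Prym specialization statement you invoke must be proved at the level of Brill--Noether loci, not just of the Prym varieties themselves. You have correctly located where the difficulties lie, but none of them is resolved in the proposal.
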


Geometric techniques similar to the ones discussed in Section \ref{sec:BrillNoether} can be used to recover the converse part of the theorem. Moreover, they lead to new bounds for curves that are special in moduli  \cite[Theorem B]{Len_Ulirsch_Skeletons} and \cite[Theorem A]{CLRW_PBN}).

\section {The structure of Prym varieties}\label{sec:PrymStructure}
Recall that the tropical Jacobian has a tiling, in which  maximal cells are indexed by spanning trees. More precisely, 
every spanning tree corresponds to a cell of the Jacobian parameterizing divisors of degree $g$ supported on the complement of the tree. In this section, we will introduce an analogous result for Prym varieties. To that end, we need to understand the image and the fibres of the Abel--Prym map.
First we present a Prym-theoretic analogue of Theorem \ref{thm:AJsurjective}.

\begin{theorem}\label{APsurjective}
The Abel--Prym map of degree $g-1$ is surjective.
\end{theorem}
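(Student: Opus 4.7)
The plan is to derive the theorem from Lemma \ref{lem:antiSymmetric} by a degree-adjustment argument: padding the degree upward using $\iota$-invariant pairs, and then ruling out the possibility that a minimal-degree representative exceeds degree $g-1$. Given $P \in \Prym_\varepsilon$ with $\varepsilon \equiv g-1 \pmod 2$, Lemma \ref{lem:antiSymmetric} produces an effective divisor $E$ on $\wt\Gamma$ with $P \simeq E - \iota E$, and Lemma \ref{lem:evenOdd} forces $\deg E \equiv g-1 \pmod 2$, so the parity is automatically correct for the Abel--Prym map of degree $g-1$ to even reach $\Prym_\varepsilon$.

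Upward adjustment is free: for any $q \in \wt\Gamma$, one checks directly that $(E + q + \iota q) - \iota(E + q + \iota q) = E - \iota E$, so adding any $\iota$-invariant pair to $E$ preserves the Prym class while raising its degree by $2$. Consequently, if the minimum degree among effective representatives is some $d_0 \leq g-1$, padding with $\tfrac{g-1-d_0}{2}$ such pairs yields an effective divisor of degree exactly $g-1$ mapping to $P$. It therefore suffices to show that every $P$ admits a representative $E - \iota E$ with $\deg E \leq g-1$.

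To bound the minimum degree from above, I would pick $E$ minimal and argue by contradiction. A first cancellation: if some $q \in \wt\Gamma$ has both $q$ and $\iota q$ in the support of $E$, then $E - q - \iota q$ is effective of strictly smaller degree with the same Prym class, contradicting minimality; hence we may assume that the supports of $E$ and $\iota E$ are disjoint. The remaining case---ruling out $\deg E > g-1$ under this disjointness---I would handle topologically: $\Sym^{g-1}(\wt\Gamma)$ is compact and $\Psi_{g-1}$ is continuous with respect to the torus topology on $\Prym_\varepsilon \cong \RR^{g-1}/\ZZ^{g-1}$ coming from Theorem \ref{thm:JacobianStructure}, so the image is closed. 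Computing the differential of $\Psi_{g-1}$ at a generic $E = p_1 + \cdots + p_{g-1}$, with the $p_i$ placed in the interiors of $g-1$ edges representing distinct $\iota$-orbits, the $g-1$ associated tangent vectors $v_i$ should span the Prym tangent space, namely the $(-1)$-eigenspace of $\iota_*$ inside $T\Jac(\wt\Gamma)$ which has dimension $g-1$. This would force the image to be open as well, and a non-empty open-and-closed subset of the connected space $\Prym_\varepsilon$ is the entire space.

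The principal obstacle is the linear-algebraic verification that, for a suitable generic choice of edges, the anti-invariant tangent vectors $v_i$ are indeed linearly independent in the Prym tangent space. This parallels the combinatorial structure underpinning Theorem \ref{thm:JacobianStructure}, whose maximal Jacobian cells are indexed by spanning trees of $\Gamma$; here one expects an analogous notion---a spanning tree of $\Gamma$ together with coherent choices of lifts for the complementary edges---to index maximal cells of the Prym, and this is presumably the tiling of the Prym alluded to at the start of this section that makes the independence statement transparent.
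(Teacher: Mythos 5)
The paper itself gives no proof of this theorem: it is quoted from \cite{LenZakharov_Kirchhoff}, and in these notes it is only observed afterwards that the stronger degree formula of Theorem \ref{thm:degAP} implies it. So your proposal has to stand on its own, and it has a genuine gap. The preliminary reductions are fine --- padding by $\iota$-invariant pairs $q+\iota q$ literally preserves the divisor $E-\iota E$ and handles the parity, so the theorem is equivalent to the image of $\Psi_{g-1}$ being all of $\Prym_\varepsilon$. But the step that carries all the weight, ``image closed, differential surjective at one generic point, target connected, hence surjective,'' does not work as stated. Surjectivity of the differential at a single point (or even on a dense open set) shows only that the image contains a nonempty open set; it does not show that the image is open. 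A closed subset of a connected torus with nonempty interior need not be everything --- a closed ball is a counterexample. An open-and-closed argument would require $\Psi_{g-1}$ to be an open map, and it manifestly is not: as in Example \ref{ex:PrymSameEdge}, cells of $\Sym^{g-1}(\wt\Gamma)$ involving factors $e\times e$ or $e\times\iota e$ are collapsed to lower-dimensional sets, so the image is a union of $(g-1)$-dimensional polyhedra together with degenerate pieces, and nothing you have said rules out this union having a boundary.

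The missing idea is a balancing (degree) argument at the codimension-one faces: one must show that around each $(g-2)$-dimensional face of the image of a maximal cell, the images of the adjacent maximal cells --- those indexed by relative spanning trees --- cover a full neighbourhood, equivalently that the sum of local degrees over a generic fibre is a positive constant, which Theorem \ref{thm:degAP} identifies as $2^{g-1}$. Your last paragraph correctly guesses the relevant combinatorics (the tiling by relative spanning trees of Section \ref{sec:PrymStructure}), and the pointwise independence you flag as ``the principal obstacle'' is indeed verifiable cell by cell; but it is the global gluing across cell boundaries, not the rank computation inside one cell, that is the actual content of the theorem. A smaller structural point: if your topological argument did work, it would prove the theorem outright, so the opening reduction via Lemma \ref{lem:antiSymmetric}, minimal representatives, and cancellation of $q+\iota q$ pairs is never used, and the ``contradiction with $\deg E>g-1$'' thread is set up but never closed.
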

In other words, the representative 
$E-\iota E$ of a Prym divisor may be chosen so that $E$ is an effective divisor of degree $g-1$.  Note that, while this result is analogues to Theorem \ref{thm:AJsurjective}, their proofs are quite different since there is no known Prym-theoretic analogue of the Riemann--Roch theorem.

Now that we know the degree at which we can represent Prym divisors, the question remains as to their structure. Recall that the Abel--Jacobi map is injective (and in particular finite) on divisors supported on the complements of spanning trees. Is there an analogous statement for Prym divisors? 
In other words, is there a combinatorial condition ensuring that a divisor has a unique
 \emph{anti-symmetric} representative? 

\begin{example}\label{ex:PrymSameEdge}
We begin with a simple example. Let $\pi:\wt\Gamma\to\Gamma$ be any double cover, and let  $P = x + y - \iota x - \iota y = \Psi(x+y)$, where $x$ and $y$ are in the interior of the same edge $e$ of $\wt\Gamma$. We claim that the Abel--Prym map is not injective at $P$ and, in fact, not even finite. To that end, we will show that there is an entire family of anti-symmetric divisors $Q$ such that $\Psi(Q)=\Psi(P)$. Indeed, for any $\varepsilon>0$ small enough, consider the divisor obtained by 
 moving the chips at $x$ and $y$ and at the same time the anti-chips at $\iota x$ and $\iota y$ a  distance $\varepsilon$ towards each other. This new divisor is linearly equivalent to $P$ but is the image of a different point of $\Sym^2(\wt\Gamma)$. 
 
 In particular,
$\Psi$ is not  finite on cells of the form $e\times e$ of the symmetric product. 
A similar idea works for cells of the form $e\times \iota e$, namely where $x$ and $y$ are in edges that are swapped by the involution (see Exercise \ref{sec:PrymExercises}.\ref{ex:PrymOppositeEdges}).
\end{example}

\begin{example}\label{ex:PrymFinite}
Now, consider the Prym divisor $p-\iota p$ on the double cover of the dumbbell graph, shown in Figure \ref{fig:dumbbellTwoRepresentatives}. In this case, if we slightly perturb the chip and anti-chip in a symmetric way, we \emph{never} get an equivalent divisor. We conclude that the Abel--Prym map is locally injective at $p$. However, it does \emph{not} follow that the map is globally injective. Indeed, one can check that $p-\iota p$ is linearly equivalent to $q-\iota q$, so $\Psi(q) = \Psi(p)$.

\begin{figure}
    \centering
    \includegraphics[width=.15\linewidth]{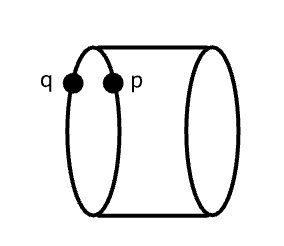}
     \caption{Two points on the double cover of the Dumbbell}
    \label{fig:dumbbellTwoRepresentatives}
\end{figure}\end{example}

In order to investigate the phenomena seen in the examples above, we make the following definition. 

\begin{definition}
Let $\pi:\widetilde\Gamma\to\Gamma$ be a free double cover and let $\Gamma'$ be a subgraph of $\Gamma$. We say that $\Gamma'$ is \emph{relatively-connected}
if the pre-image of every connected component of $\Gamma'$ is connected. If, moreover, each component of $\Gamma'$ has genus $1$ then we refer to it as a \emph{relative spanning tree}.   
\end{definition}

\begin{remark}
A relative spanning tree is also known in the literature as an \emph{odd genus 1 decomposition}. 
\end{remark}

\begin{example}
Let $x,y$ be as in Example \ref{ex:PrymSameEdge} and let $\Gamma'$ be the subgraph of $\Gamma$ obtained by removing $\pi(x)$ and $\pi(y)$. Then $\Gamma'$ consists of two connected components: the segment $J$ between $\pi(x)$ and $\pi(y)$ and its complement. The preimage of $J$ in $\wt\Gamma$ consists of the two disjoint segments between $x$ and $y$ and between $\iota x$ and $\iota y$ respectively. It follows that $\Gamma'$ is not relatively connected. 

On the other hand, if we remove the point $\pi(p)$ from the dumbbell graph in Example  \ref{ex:PrymFinite}, we obtain a single connected component. Its preimage is obtained from the graph in Figure \ref{fig:dumbbellTwoRepresentatives} by removing $p$ and $\iota p$, which is connected. It follows that the complement of $\pi(p)$ is relatively connected. 
\end{example}

As the last example and the terminology suggest, relatively connected sets and  relative spanning trees play a similar role for Prym varieties as connected sets and spanning trees play for Jacobians. This will be made precise in Theorem \ref{thm:APfinite}.
For the next lemma, recall that the genus of a non-connected graph equals 
the sum of the genera of the connected component minus the number of components plus $1$ (this follows from Equation \ref{eq:genus}).

\begin{lemma}
Let $E_1,\ldots,E_d$ be a collection of distinct open edges of $\Gamma$ and suppose that  $\Gamma' = \Gamma\setminus\{E_1,\ldots,E_d\}$  is  relatively connected. Then $d\leq g-1$. Furthermore,  $\Gamma'$ is a relative spanning tree if and only if $d=g-1$. 
\end{lemma}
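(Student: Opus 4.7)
The plan is to compute $g(\Gamma')$ via Equation \eqref{eq:genus} in two ways and combine this with the structural fact that a free double cover of a tree is necessarily disconnected. Write $f'$ for the number of connected components of $\Gamma'$, and let $v,e$ denote the vertex and edge counts of $\Gamma$, so $g = e - v + 1$. Removing $d$ open edges preserves the vertex set, so
\[
g(\Gamma') = (e-d) - v + f' = g - 1 - d + f'.
\]

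The first key step is to bound $g(\Gamma')$ from below by $f'$ using the relative connectedness hypothesis. Let $C_1,\ldots,C_{f'}$ be the connected components of $\Gamma'$. By hypothesis each $\pi^{-1}(C_i)$ is connected, so $\pi$ restricts to a connected free double cover $\pi^{-1}(C_i)\to C_i$. The crucial observation — and essentially the only nonformal input — is that no free double cover of a tree can be connected: the construction of free double covers recalled earlier produces one cover per choice of lifts for the edges outside a spanning tree, and when $C_i$ is itself a tree there is only one such cover, namely the disjoint union of two copies. Hence each $C_i$ has genus at least $1$. Since the genus of a graph equals the sum of the genera of its components, this yields
\[
g(\Gamma') \;=\; \sum_{i=1}^{f'} g(C_i) \;\geq\; f'.
\]

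Combining the two displays gives $g - 1 - d + f' \geq f'$, i.e., $d \leq g - 1$, which is the first assertion. For the second, note that equality $d = g-1$ is equivalent to $g(\Gamma') = f'$, which in view of $g(C_i) \geq 1$ forces $g(C_i) = 1$ for every $i$ — precisely the condition that $\Gamma'$ is a relative spanning tree. Conversely, if every $g(C_i) = 1$ then $g(\Gamma') = f'$ and the same computation returns $d = g-1$.

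The only step that is not routine bookkeeping is the reduction to "each component has positive genus," and the main obstacle would be subtle failures of relative connectedness at vertices (e.g.\ components meeting only at a vertex of $\Gamma$). This is not actually an issue here because the $E_i$ are open edges, so the vertex set of $\Gamma'$ equals that of $\Gamma$ and components of $\Gamma'$ meet $\pi^{-1}$ cleanly; there is no ambiguity in the count $\widetilde{f}' = f'$ used implicitly above.
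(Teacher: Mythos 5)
Your proof is correct and follows essentially the same route as the paper's: both arguments reduce to the observation that relative connectedness forces every component of $\Gamma'$ to have genus at least $1$ (since a tree admits no connected free double cover), and then conclude by genus bookkeeping. The only cosmetic difference is that you track the genus via $e-v+f$ (additive over components) while the paper tracks it via "removing an edge drops the genus by one," and you justify the positive-genus claim from the classification of free double covers rather than from the formula $g(\widetilde\Gamma)=2h-1$; these are equivalent.
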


\begin{proof}
Since removal of an edge reduces the genus by $1$, the genus of $\Gamma'$ is $g-d$. 
Now, suppose that $\Gamma'$  consists of $k$ connected components with genera $g_1,g_2,\ldots, g_k$. Recall that a double covering a graph of genus $h$ has genus $2h-1$. 
In particular, a graph of genus $0$ cannot have connected double covers.
Since $\Gamma'$ is relatively connected, it follows that  $g_i>0$ for all $i$. 
Therefore, the genus of $\Gamma'$ is  $\sum_{i=1}^k g_i - k + 1\geq \sum_{i=1}^k 1 - k + 1 = k-k + 1 =  1$. As the genus also equals $g-d$, we have $d\leq g-1$.

Now, if $\Gamma'$ is a relative spanning tree then, by definition, $g_i=1$ for each $i$, so $g-d = k-k+1 = 1$ and $d=g-1$. If, on the other hand, $d=g-1$ then $\sum_{i=1}^k (g_i - 1) + 1 = g-d =1$. Since each $g_i\geq 1$ for each $i$, the equality can only be satisfies if $g_i=1$ for each $i$. In particular,  $\Gamma'$ is a relative spanning tree. 
\end{proof}

We can now phrase one of the main results. 

\begin{theorem}\cite[Theorem 4.1]{LenZakharov_Kirchhoff}\label{thm:APfinite}
Let $\pi:\wt\Gamma\to\Gamma$ be a free double cover and let $\wt e_1,\ldots,\wt e_d$ be distinct edges of $\wt\Gamma$.  Denote $e_i = \pi(\wt e_i)$ for each $i$ and let   $\Gamma' = \Gamma\setminus\{e_1,\ldots,e_d\}$. Finally, denote $\sigma = \wt e_1\times\cdots\times \wt e_d$
the corresponding cell of $\Sym^{d}(\wt\Gamma)$. 
Then the Abel--Prym map of degree $d$ is finite at $\sigma$ if and only if $\Gamma'$ is relatively connected. 
\end{theorem}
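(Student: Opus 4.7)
The plan is to reduce the statement to injectivity of the differential of $\Psi$ on $\Int(\sigma)$. In coordinates $(t_1, \dots, t_d)$ recording the positions of the chips along $\wt e_1, \dots, \wt e_d$, the restriction $\Psi|_{\sigma}$ is affine linear into a Euclidean chart of $\Prym_\varepsilon$; hence $\Psi$ is finite at $\sigma$ if and only if the associated linear map $d\Psi: \RR^d \to T\Prym_\varepsilon$ is injective, and the task becomes computing $\ker(d\Psi)$.

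To identify the differential, I would use the identification $T\Jac(\wt\Gamma) \cong H^1(\wt\Gamma, \RR) = C^1(\wt\Gamma, \RR)/dC^0(\wt\Gamma, \RR)$, compatibly with the period description of Section \ref{sec:JacobianStructure}, so that $T\Prym_\varepsilon$ is the $\iota$-anti-invariant summand $H^1(\wt\Gamma,\RR)^-$. Under this identification, an infinitesimal displacement of $p_i$ along $\wt e_i$ contributes the class $[\mathbf{1}_{\wt e_i}]$ to Abel--Jacobi, so its contribution to Abel--Prym is $[\mathbf{1}_{\wt e_i} - \mathbf{1}_{\iota \wt e_i}]$. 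Consequently, a tuple $(s_1, \dots, s_d)$ lies in $\ker(d\Psi)$ if and only if the 1-cochain $\sum_i s_i (\mathbf{1}_{\wt e_i} - \mathbf{1}_{\iota \wt e_i})$ is exact, equalling $df$ for some $f \in C^0(\wt\Gamma, \RR)$, which we may assume $\iota$-anti-invariant by averaging $f$ against $-f\circ\iota$. The equation $df = \sum_i s_i (\mathbf{1}_{\wt e_i} - \mathbf{1}_{\iota \wt e_i})$ then says that $f$ is constant on each component of $\pi^{-1}(\Gamma') = \wt\Gamma \setminus \{\wt e_i, \iota \wt e_i\}_i$, with $s_i$ equal to the jump of $f$ across $\wt e_i$.

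The rest of the argument is combinatorial. On every $\iota$-stable component of $\pi^{-1}(\Gamma')$ (arising from a component $C\subset\Gamma'$ with connected preimage), anti-invariance forces $f\equiv 0$. On any pair of components swapped by $\iota$ (arising from a component $C$ with disconnected preimage), $f$ takes opposite constants, contributing one real degree of freedom. Thus $f$ is necessarily zero, and hence $\ker(d\Psi)=0$, exactly when $\Gamma'$ is relatively connected. Conversely, if some component $C\subset\Gamma'$ has $\pi^{-1}(C)=C^+\sqcup C^-$, set $f=+1$ on $C^+$, $-1$ on $C^-$, and $0$ on all other components; connectedness of $\wt\Gamma$ ensures that some $\wt e_i$ meets $C^+$ with its other endpoint in a component where $f$ takes a different value, producing a nonzero $s_i$ and exhibiting a nontrivial kernel element.

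The main obstacle will be the careful verification in the second paragraph that the differential of $\Psi$ really is given by the cochain formula $s_i\mapsto [\mathbf{1}_{\wt e_i}-\mathbf{1}_{\iota\wt e_i}]$. This requires pinning down the tangent space of $\Prym_\varepsilon$ as $H^1(\wt\Gamma,\RR)^-$ in a way compatible with the universal cover construction of $\Jac(\wt\Gamma)$, and checking that an infinitesimal chip movement along $\wt e_i$ indeed translates into the indicator 1-cochain; once this calibration is in place, the proof reduces to the combinatorial dichotomy above.
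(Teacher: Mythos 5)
The paper itself does not prove this theorem --- it is imported from \cite[Theorem 4.1]{LenZakharov_Kirchhoff} --- so there is no internal proof to compare against; judged on its own terms, your argument is correct and is the natural linear-algebraic route. Reducing finiteness on the cell to injectivity of the linear part of $\Psi|_\sigma$ is legitimate (the target is a torus, but a discrete fibre intersected with the compact cell is finite, and a nontrivial kernel produces a segment in a fibre through any interior point). The calibration you worry about in your last paragraph is exactly what the period description of Section \ref{sec:JacobianStructure} supplies: each coordinate of the Abel--Jacobi map is an oriented intersection length with a basis cycle $\sigma_j$, and its derivative as a chip moves along $\wt e_i$ is the pairing of $\sigma_j$ with $\wt e_i$, which is precisely the functional represented by $[\mathbf{1}_{\wt e_i}]$ in $C^1/dC^0$; subtracting the $\iota$-translate gives your formula for $d\Psi$. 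From there the dichotomy is right: exactness means $df$ is supported on $\bigcup_i(\wt e_i\cup\iota\wt e_i)$, so $f$ is locally constant on $\pi^{-1}(\Gamma')$; anti-invariance kills $f$ on every $\iota$-stable component and leaves one free parameter per swapped pair; and in the converse direction connectedness of $\wt\Gamma$ forces some $\wt e_i$ (or $\iota\wt e_i$, whence $\wt e_i$ by anti-symmetry) to carry a nonzero jump.

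Two points should be made explicit. First, recovering $s_i$ as ``the jump of $f$ across $\wt e_i$'' tacitly assumes the $2d$ edges $\wt e_1,\iota\wt e_1,\ldots,\wt e_d,\iota\wt e_d$ are pairwise distinct, i.e.\ that the $e_i$ are $d$ \emph{distinct} edges of $\Gamma$. If $\wt e_j=\iota\wt e_i$ for some $i\neq j$, the coefficient of $\mathbf{1}_{\wt e_i}$ in your cochain is $s_i-s_j$, the kernel is automatically nontrivial (this is the $e\times\iota e$ degeneration of Exercise \ref{sec:PrymExercises}.\ref{ex:PrymOppositeEdges}), and $\Gamma'$ can nevertheless be relatively connected; so the theorem must be read with that implicit hypothesis, and your proof should state it. Second, if some $\wt e_i$ is a loop of $\wt\Gamma$ the ``jump'' is zero by definition; the argument survives (the coefficient of a loop indicator in any exact cochain vanishes, and in the converse direction the nonzero jump is simply carried by a different edge), but it deserves a sentence. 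Neither issue affects the substance of the proof.
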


Note that the theorem only states that the map is finite, but not necessarily injective. 

\begin{example}\label{ex:APfinite}
Let $\pi:\wt\Gamma\to\Gamma$ be the double cover of the dumbbell as in the right side of Figure \ref{fig:dumbbellCovers}. If $p\in\wt\Gamma$ is   such that $\pi(p)$ belongs to the bridge then the complement of $\pi(p)$ consists of two connected components. The preimage of each of them in $\wt\Gamma\setminus\{p,\iota p\}$ is connected, so $\Gamma\setminus\{\pi(p)\}$ is relatively connected. Theorem \ref{thm:APfinite} therefore implies that the Abel--Prym map is finite at $p$. 

Now let $\eta:\wt\Lambda\to\Lambda$ be the double cover seen in Figure \ref{fig:thetaCovers}. If $q$ is belongs to the edge $\alpha'$ then $\wt\Lambda\setminus\{q,\iota q\}$ is connected. It follows that $\Lambda\setminus\{\eta(q)\}$ is relatively connected, so the Abel--Prym map is finite at $q$. 
But if $r$ belongs to $\beta'$ then $\Lambda\setminus\{r,\iota r\}$ is not connected. Since $\Lambda\setminus\{\eta(r)\}$ is connected, it follows that it is not relatively connected. Theorem \ref{thm:APfinite} now implies that the Abel--Prym map is not finite at $r$. We can also verify that explicitly by perturbing $r$ and $\iota r$ simultaneously.

\end{example}

In general, determining the local degree of the Abel--Prym map is a hard problem. However, the case of degree $g-1$ is more well-behaved and can be described combinatorially. 
First, we need the following definition. 

\begin{definition}
Let $\pi:\wt\Gamma\to\Gamma$ be a free double cover and suppose that $P$ is a Prym divisor of the form $P=E-\iota E$, where $E$ is an effective divisor supported on the interior of edges. If $E$ does not have multiplicities (namely $E(x)\leq q$ at every point $x$) and $\Gamma\setminus\pi_*(E)$ is relatively connected then the \emph{weight} of $P$, denoted $wt(P)$ is the number of connected components of  $\Gamma\setminus\pi_*(E)$. Otherwise, the weight of $P$ is $0$. 
\end{definition}

\begin{theorem}\label{thm:degAP}\cite[Theorem B]{LenZakharov_Kirchhoff}
Let $P$ be a point of $\Prym_{g-1}(\wt\Gamma/\Gamma)$ whose preimage under the Abel--Prym map consists of divisors   $E_1,\ldots,E_k$ supported on the interior of edges. Denote $P_i = E_i-\iota E_i$ for each $i$.
Then
\[
\sum_{i=1}^k wt(P_i) = 2^{g-1}.
\]
\end{theorem}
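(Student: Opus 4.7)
The plan is to interpret the left-hand side as the degree of the Abel--Prym map $\Psi_{g-1}: \Sym^{g-1}(\wt\Gamma) \to \Prym_{g-1}(\wt\Gamma/\Gamma)$ viewed as a harmonic morphism between pure $(g-1)$-dimensional polyhedral complexes. By Theorem \ref{APsurjective} this map is surjective, and by the conservation of degree for harmonic morphisms the quantity $\sum_i m(E_i)$ is independent of the generic target point $P$, where $m(E_i)$ is the local degree of $\Psi_{g-1}$ at $E_i$. The strategy is therefore two-fold: identify $m(E_i)$ with $\mathrm{wt}(P_i)$ in local coordinates, and then compute the common value $\sum_i m(E_i) = 2^{g-1}$ in one favourable configuration.

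The first and most technical step is the local degree computation. Suppose $E = x_1 + \cdots + x_{g-1}$ is supported on the interiors of $g-1$ distinct edges $\wt e_1, \ldots, \wt e_{g-1}$ whose projections $e_j = \pi(\wt e_j)$ are distinct and such that $\Gamma' = \Gamma \setminus \{e_j\}$ is relatively connected; a dimension count forces $\Gamma'$ to be a relative spanning tree with some number $k$ of components, each of genus $1$. By Theorem \ref{thm:APfinite}, $\Psi_{g-1}$ is finite at $E$. In local coordinates, nearby anti-symmetric divisors are parametrised by $(x_1, \ldots, x_{g-1})$ and sent to $[E - \iota E] \in \Prym_{g-1}$; linear equivalences among anti-symmetric divisors correspond to piecewise-linear functions on $\wt\Gamma$ whose divisor is anti-symmetric. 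The claim is that each genus-$1$ component $C_j$ of $\Gamma'$ contributes exactly one such independent rational function (its lift to $\wt\Gamma$ being a connected double cover of a genus-$1$ graph, which contributes one anti-invariant homology class), so the Jacobian relations collapse precisely $k$ directions and the fibre over a generic nearby point has exactly $k$ elements, giving $m(E) = k = \mathrm{wt}(P)$.

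The second step is the global degree evaluation, for which I would fix a spanning tree $T$ of $\Gamma$ with complementary edges $e_1, \ldots, e_g$ and consider a generic Prym divisor $P_0$ whose anti-symmetric representatives can be arranged to have projections supported on $g-1$ of those edges. An anti-symmetric lift of such a configuration amounts to choosing, for each complementary edge whose projection appears, one of its two preimages in $\wt\Gamma$. Enumerating these choices against the involution (and using the $\iota$-pairing of lifts together with the fact that the Abel--Jacobi map on $\wt\Gamma$ is generically injective) yields $2^{g-1}$ distinct preimages, each of weight $1$ by the first step; by constancy of the degree this gives $\sum_i \mathrm{wt}(P_i) = 2^{g-1}$ for every generic $P$.

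The main obstacle is the local degree computation, specifically verifying that the $k$ components of a relative spanning tree contribute $k$ genuinely independent anti-symmetric linear equivalences and no more. This hinges on a careful analysis of the $(-1)$-eigenspace under $\iota^*$ of piecewise-linear functions on $\wt\Gamma$, and on the structural fact that anti-invariant rational functions on the pre-image of each genus-$1$ component of $\Gamma'$ are one-dimensional while no additional anti-symmetric equivalences arise from the removed edges connecting different components. The global count is comparatively clean, essentially a bookkeeping exercise, provided the local computation pins down the weight as the correct local degree.
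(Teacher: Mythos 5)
These notes do not actually contain a proof of Theorem \ref{thm:degAP} --- it is quoted from the reference --- so there is no internal argument to compare against; your overall strategy (realise $\Psi_{g-1}$ as a harmonic morphism of polyhedral complexes, identify its local degrees with the weights, then evaluate the global degree) is indeed the strategy of the cited paper. The genuine gap is in your local degree computation. You assert that the local multiplicity on a cell whose complementary subgraph $\Gamma'$ is a relative spanning tree with $k$ components equals $k$, on the grounds that each genus-$1$ component contributes ``one anti-invariant homology class'' so that ``the Jacobian relations collapse precisely $k$ directions'' and the fibre has $k$ elements. This does not hold up. The preimage of a genus-$1$ component is a circle double covering a circle: its anti-invariant homology is \emph{zero}, and what it actually contributes is the two-torsion group $\ZZ/2\ZZ$ (the Prym of the cycle covering itself). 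The correct bookkeeping gives one factor of $2$ for each edge of a spanning tree of the graph whose vertices are the components of $\Gamma'$ and whose edges are the $g-1$ removed edges, i.e.\ a local multiplicity of $2^{k-1}$; moreover $\Psi_{g-1}$ is typically \emph{injective} on such a cell, so this multiplicity is a lattice index (dilation factor), not a literal count of preimages inside the cell. Since $k=2^{k-1}$ exactly when $k\le 2$, every example worked out in these notes is consistent with both answers; they diverge at $k=3$. Concretely, take the chain of three loops with the free double cover in which each loop unfolds, and $E=x_1+x_2$ with $x_1,x_2$ on lifts of the two bridges ($k=3$). A direct computation with the period lattice shows $E$ is the \emph{unique} effective anti-symmetric representative of degree $2$ of its class, so constancy of the degree forces its multiplicity to be $2^{g-1}=4$, not $3$. (The normalisation that makes the sum constant is $2^{k-1}$, as in the cited reference; the definition of $wt$ given in these notes agrees with it only for $k\le 2$.)

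Your global step has a second, independent gap. You propose to exhibit $2^{g-1}$ preimages of a single generic $P_0$ by choosing, for each of $g-1$ marked edges, one of the two lifts of the chip on it. But replacing a chip $x$ by $\iota x$ changes the class of the resulting anti-symmetric divisor by $2(\iota x - x)$, which is generically nonzero in $\Prym(\wt\Gamma/\Gamma)$; in the example above the four divisors obtained from the lifts of $\pi(x_1)+\pi(x_2)$ map to four \emph{distinct} points of the Prym variety. So the $2^{g-1}$ choices of lifts lie over $2^{g-1}$ different points rather than over one, and this enumeration cannot evaluate the degree. A working route is to integrate the putative degree over the Prym variety: summing $4^{k-1}\prod_{e}\ell(e)$ over odd genus one decompositions and comparing with the covolume of the lattice $\Lambda\cap\ker(\pi_*)$ verifies both the local multiplicities $2^{k-1}$ and the global degree $2^{g-1}$ simultaneously, which is essentially how the cited paper proceeds.
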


Note that if a cell of $\Sym^{g-1}(\wt\Gamma)$ classifies divisors that are not fully supported on the interior of edges, then its dimension is strictly smaller than $g-1$.  Therefore, the Prym variety contains a dense open subset of divisor classes that satisfy the conditions of Theorem \ref{thm:degAP}. The theorem therefore implies that the generic Prym divisor  has $2^{g-1}$ anti-symmetric representatives, counted with their weights as multiplicities.

\begin{example}
In Example \ref{ex:APfinite}, the graph $\Gamma\setminus\{\pi(p)\}$ has two connected component, so $p-\iota p$ has weight $2$. Since the genus $\Gamma$ is $2$, it follows from Theorem \ref{thm:degAP} that $p-\iota p$ does not equal any other Prym divisor. If, on the other hand, $p$ is  as in Figure \ref{fig:dumbbellTwoRepresentatives} then $p-\iota p$ has weight $1$. So this time, it follows from Theorem \ref{thm:degAP} that $p-\iota p$ must have another representative. As we saw in Example  \ref{ex:PrymFinite}, the other representative is $q-\iota q$. 

Note that, in general, there is no guarantee that the different representatives will satisfy a nice symmetry with respect to one another or even have the same weight (see \cite[Example 5.3]{LenZakharov_Kirchhoff}).  
\end{example}

\begin{remark}
 $\Prym(\wt\Gamma/\Gamma)$ has a metric structure induced from the symmetric product. In that case, the Abel--Prym map is piecewise linear whose determinant at each cell coincides with the weight of points in its interior. As a result, the Abel--Prym map $\Psi_{g-1}$ is a harmonic map of polyhedral complexes of degree $2^{g-1}$.

Using the Abel--Prym map and the polyhedral structure of the symmetric product, we can naturally assign a volume to the cells of the Prym variety.  
Theorem \ref{thm:degAP} therefore describes the volume of the Prym variety as a weighted sum 
over all the relative spanning trees. This result is known as the \emph{tropical Prym--Kirchhoff matrix-tree theorem}. 
In the special case where all edge lengths are $1$, the formula was known prior to the discovery of tropical Prym varieties in the language of critical groups (see \cite[Proposition 9.9]{reiner2014critical}).
\end{remark}

\pagebreak
\appendix
\section{Chip-Firing Exercises}\label{sec:chipFiringExercises}


\begin{enumerate}

    \item Check whether the following divisors are equivalent to 0.
    
    \begin{figure}[h]
     \centering
     \begin{subfigure}[b]{0.2\textwidth}
         \centering
         \includegraphics[width=0.6\textwidth]{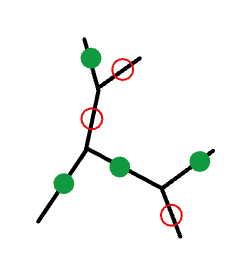}
     \end{subfigure}
     \hfill
     \begin{subfigure}[b]{0.2\textwidth}
         \centering
         \includegraphics[width=0.6\textwidth]{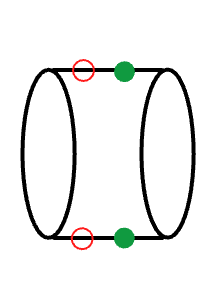}
     \end{subfigure}
     \hfill
     \begin{subfigure}[b]{0.2\textwidth}
         \centering
         \includegraphics[width=0.6\textwidth]{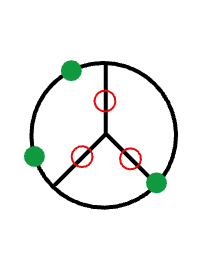}
     \end{subfigure}
     \hfill
     \begin{subfigure}[b]{0.2\textwidth}
         \centering
         \includegraphics[width=0.6\textwidth]{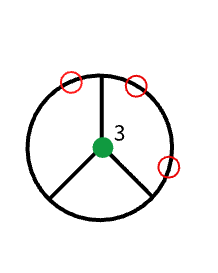}
     \end{subfigure}
     \hfill
        \caption{Four examples of divisors on graphs}
        \label{fig:Exercise1}
\end{figure}

    \item Show that when two divisors are related by Condition \ref{conditionA}, they are indeed, linearly equivalent. 
    
        \item Let $\Gamma$ be the chain of three loops and let $D=x+y+z$ be any divisor of degree $3$. Show that $D$ is equivalent to a unique divisor with a single chip on each loop. What happens if the degree of $D$ is greater than $3$? 
        
    \item Suppose that $\Gamma$ has a collection of edges $E = \{e_1,e_2,\ldots,e_k\}$ whose removal disconnects the graph. Suppose moreover that $E$ is minimal with that property (so that removing any subset of $E$ does not disconnect the graph).  
    Suppose that $D = p_1 + p_2 + \cdots + p_k$ where each $p_i$ is in the interior of $e_i$. Show that the divisor obtained by moving each $p_i$ a small distance $\varepsilon$ is equivalent to $D$.  
    
    What happens of $E$ is not minimal?

\item \label{exer:directSum} Let $\Gamma_1$ and $\Gamma_2$ be metric graphs and denote $\Gamma_1\underset{p_1,p_2}{\cup}\Gamma_2$ the graph obtained by attaching them along points $p_1\in\Gamma_1,p_2\in\Gamma_2$. Find a natural bijection between $\Jac(\Gamma_1\underset{p_1,p_2}{\cup}\Gamma_2)$ and $\Jac(\Gamma_1)\oplus\Jac(\Gamma_2)$.

    \item Find the $p$-reduced representatives of the following divisors.
    
    \begin{figure}[h]
     \centering
     \hfill
     \begin{subfigure}[b]{0.3\textwidth}
         \centering
         \includegraphics[width=0.6\textwidth]{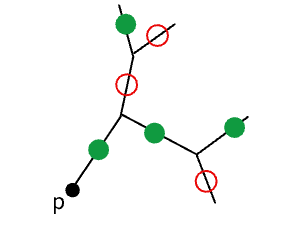}
     \end{subfigure}
     \hfill
     \begin{subfigure}[b]{0.3\textwidth}
         \centering
         \includegraphics[width=0.6\textwidth]{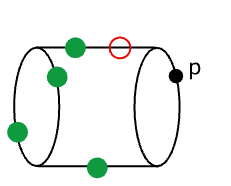}
     \end{subfigure}
     \hfill
     \begin{subfigure}[b]{0.3\textwidth}
         \centering
         \includegraphics[width=0.6\textwidth]{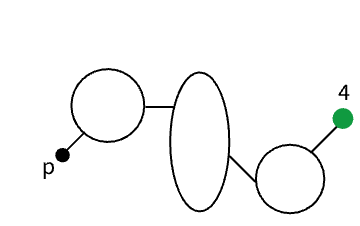}
     \end{subfigure}
     \hfill
        \caption{Divisors and a point $p$}
        \label{fig:Exercise6}
\end{figure}
 
 \item\label{exer:rigidDivisors} Let $D$ be an effective divisor  on a graph $\Gamma$, and assume that it is supported on the interior of edges and doesn't have multiplicities, namely $D(x)\leq 1$ for all $x$.
 \begin{enumerate}
     \item Suppose that $\Gamma\setminus D$ is connected. Show that  $D$ doesn't move. Namely, that there is no other effective divisor $E$ such that $D\simeq E$.
     
     \item Conversely, if $D$ doesn't move, show that   $\Gamma\setminus D$ is  connected. 
     
     \item If $\deg(D) = g$ (where $g$ is the genus of $\Gamma$), show that $D$ is rigid (namely, it is the unique effective in its class) if and only if the complement of $D$ is a spanning tree for $\Gamma$.
     
     \item If $\deg(D)>g$ show that  $D$ always moves. 
 \end{enumerate}

    \item\label{exer:reduced} Let $D$ be a $p$-reduced divisor. 
    \begin{enumerate}
        \item Suppose that $D(p)<0$. Show that $D$ is not equivalent to an effective divisor (hint: otherwise, there is a piecewise linear function $\varphi$ such that $D+\ddiv\varphi$ is effective. Now let $A$ be the set where $\varphi$ obtains its minimum).
        
        \item Suppose that $D(p)\geq 0$ and let $D'$ be an effective divisor equivalent to $D$. Show that $D'(p)\leq D(p)$.  
        
        \item Show that divisors have a unique reduced representative. That is, if $D'\simeq D$ and $D'$ is also $p$-reduced, then $D'=D$. (this part of the question does not depend on the previous parts)
        
        \item Divisors on algebraic curves.\label{exer:algebraicCurve}
\begin{enumerate}
    \item Let $p$ be a point on a curve $C$  of genus $g>0$. Show that the divisor $D=p$ has rank $0$.
Hint: otherwise, consider the map to projective space associated with D. It is a map of
degree $1$ onto $\PP^1$.

\item Let C be a curve and suppose that $p\simeq q$ for some $p\neq q$. Show that the genus of $C$ is 0.  

\end{enumerate}

    \end{enumerate}
\end{enumerate}

\newpage

\section{Rank exercises}\label{sec:rankExercises}

\begin{enumerate}

\item\label{exer:canonicalDegree} Let $\Gamma$ be a metric graph of genus $g$. Show that the canonical divisor $K_{\Gamma}$ has degree $2g-2$.

\item\label{exer:superadditive} Let $D$ and $D'$ be divisors such that $r(D)\geq 0$. Show that
$r(D+D')\geq r(D) + r(D')$. What happens when both $D$ and $D'$ have rank $-1$?  

\item\label{exer:nonTreeRank} Let $D$ be a divisor of degree $d>0$ on a graph $\Gamma$ that is not a tree. Show that $r(D) < d$.

\item  For a divisor $D$  and a point $q\in\Gamma$, denote $D_q$    the $q$-reduced divisor equivalent to $D$.
Show that a divisor $D$ has rank  1 if and only if $D_q(q)\geq 1$ for every point $q$. Is
the same true for higher ranks (where $D$ has rank $r$ if $D-p_1-p_2-\ldots - p_r$ is equivalent to effective for \emph{every} collection $p_1,\ldots,p_r$)?

\item\label{exer:hyperelliptic} 
Recall that a graph that is not a tree is called \emph{hyperelliptic} if it has a divisor of degree $2$ and rank  $1$ (see the previous question for the definition of having rank $1$). A graph is called \emph{$2$-connected} if it does not become disconnected after the removal of a single point. 

\begin{enumerate}
    \item Show that a 2-connected graph $\Gamma$ is hyperelliptic if and only if it has an involution $\tau$ (not necessarily fixed-point free) such that $\Gamma/\tau$ is a tree.  
    
    \item Explain what can go wrong when $\Gamma$ is not 2-connected. 
\end{enumerate}

\end{enumerate}

\newpage

\section{Double covers and Pryms exercises}\label{sec:PrymExercises}
\begin{enumerate}

\item\label{exer:genusDoubleCover} Suppose that $\widetilde\Gamma$ is a free double cover of a metric graph $\Gamma$ of genus $g$. Show that the genus of $\widetilde\Gamma$ is $2g-1$.

\item Let $\pi:\wt\Gamma:\to\Gamma$ be free a double cover. Show that the cover $\pi'$ obtained by contracting an edge of $\Gamma$ and the corresponding edges of $\wt\Gamma$ is a free double cover as well.

\item Sketch all the double covers of the chain of three loops and of the peace-sign graph.

\item Let $D = p-q$ be the divisor shown below (where the graph is double covering the dumbbell).    Find an anti-symmetric divisor equivalent to $D$.

\begin{figure}[h]
    \centering
    \includegraphics[width=.15\linewidth]{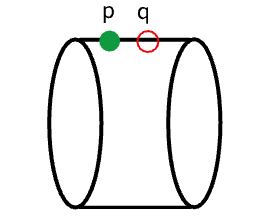}
    \caption{The divisor $D$ on the double cover of the dumbbell}
    \label{fig:dumbbellExample}
\end{figure}

\item\label{exer:antiSymmetric} \begin{enumerate}
    \item Complete the missing steps in the proof of Lemma \ref{lem:antiSymmetric}.
    \item Keeping the same notations as the in the lemma, find a bound for the degree of $E$ in terms of the genus of the graph. Theorem \ref{thm:AJsurjective} and \cite[Lemma 4.2]{Len_BNrank} could be useful here.  
\end{enumerate}

\noindent Note that a stronger version of this result follows from Theorem \ref{thm:degAP}. However, the proof given here has the advantage of being more explicit and provides more control over the resulting divisor.

\item\label{exer:cycle} Let $\pi:\Gamma\to\Gamma$ be the double cover of the cycle graph by itself as in Example \ref{ex:doubleCoverCycle}. Show that the even and odd Prym varieties each consists of a single point. 
In particular, verify Lemma \ref{lem:evenOdd}. 

\item Let $\pi:\wt\Gamma\to\Gamma$ be any of the double covers seen in Figure \ref{fig:covers}. Find an explicit bijection between the Prym variety and $\RR/\ZZ$.

\item\label{ex:PrymOppositeEdges} 
Let $P = x+y -\iota x - \iota y$, where $x\in e$ and $y\in \iota e$ for some edge $e$ of $\wt\Gamma$. 
Find an infinite family of effective divisors $z+w$ such that $\Psi(z+w) = P$. In particular, this shows that the Abel--Prym map is not finite on cells of the form $e\times \iota e$.   

\item Let $\pi:\wt\Gamma\to\Gamma$ be a free double cover where $\Gamma$ has genus $g$,  and suppose that $P = E-\iota E\in\Div\wt\Gamma$, where  the supports of $E$ and $\iota E$ do not intersect and $\deg E = d \geq g$. Show that the Abel--Prym map $\Psi_d$ is not finite at $P$. That is, that there are infinitely many effective divisors $F$ of degree $d$ such that $F-\iota F\simeq E-\iota E$. 

\item Let $\pi:\wt\Gamma\to\Gamma$ be a free double cover and suppose that $\Gamma$ is 2-connected (that is, it remains connected after the removal of a single point) and has genus greater than 1. show that the Abel--Prym map $\Psi_1$ is finite. That is, for every divisor of the form $p-\iota p$, there are only finitely many divisors $q-\iota q$ that are equivalent to it. 

\item  Give an example of a Prym divisor that has eight different Prym representatives. Find an example of a Prym divisor whose representatives have different weights.

 \item\label{exer:PrymSurjective}
 In this exercise, we  present a partial proof for the fact that every tropical Jacobian is a tropical Prym variety of some double cover 
 (cf. Remark \ref{rem:PrymTorelli}).
 As in Exercise \ref{sec:chipFiringExercises}.\ref{exer:directSum}, we use $\Sigma_1\underset{x_1,x_2}{\cup}\Sigma_2$ to denote the attachment of two graphs along a point.

\begin{enumerate}
    \item Suppose that $\widetilde{\Gamma}\to \Gamma$ is a double cover and  let $\Lambda$ be another metric graph. Fix points $p\in\Gamma$ and $q\in\Lambda$ and let $p_1,p_2$ be the preimages of $p$ in $\wt\Gamma$.
     Consider the double cover 
     \[
     \pi':\widetilde{\Gamma}\underset{p_1,q_1}{\cup}\Lambda_1\underset{p_2,q_2}{\cup}\Lambda_2\to \Gamma\underset{p,q}{\cup}\Lambda,
     \]
     where $\Lambda_1$ and $\Lambda_2$ are two isomorphic copies of $\Lambda$ and $q_1,q_2$ are their points corresponding to $q$.
     Find a natural bijection between the Prym variety of this double cover and $\Prym(\widetilde{\Gamma}/\Gamma)\oplus \Jac(\Lambda)$.
     
     \item Given a graph $\Gamma$, find a double cover whose Prym variety has a bijection with $\Jac(\Gamma)$. 
\end{enumerate}

\medskip
\noindent For the proof to be complete, we would  need to properly define principally polarized tropical abelian varieties and isomorphisms thereof.    
\medskip

\item\label{exer:PrymHyperelliptic} This question revolves around hyperelliptic graphs (see Exercise \ref{sec:chipFiringExercises}.\ref{exer:hyperelliptic} for more details). These examples were initially worked out by Zo\"e Gemmell and Dyson Yang as part of the St Andrews undergraduate summer research program.
Let $\pi:\wt\Gamma\to\Gamma$ be a free double cover. 
\begin{enumerate}
    \item Show that if $\wt\Gamma$ is hyperelliptic  then so is $\Gamma$.
Furthermore, show that the hyperelliptic involution of $\wt\Gamma$ commutes with the involution $\iota$ coming from the double cover. 
    
    \item Find an example where $\Gamma$ is hyperelliptic but $\wt\Gamma$ is not.  
    
    \item Show that, if $\wt\Gamma$ is hyperelliptic, then the Abel--Prym map $\Psi_1$ is never injective. 
    
    \item Find an example where $\Gamma$ is not hyperelliptic but the Abel--Prym map is not finite. 
    
    \item Find an example where $\wt\Gamma$ is not hyperelliptic and the Abel--Prym map is finite but not injective (this is in contrast with the analogous situation in algebraic geometry, see \cite[Corollary A.15]{LenZakharov_Kirchhoff}). 
    

\end{enumerate}


    
    
     
     
     


\end{enumerate}

\newpage

\section{Open problems}\label{sec:openProblems}
\begin{enumerate}

    

\item Let $\pi:\wt\Gamma\to\Gamma$ and $\eta:\Lambda'\to\Lambda$ be two free double cover, and let $\pi\oplus\eta$ be the double cover obtained by attaching $\Gamma$ to $\Lambda$ along a single point and attaching $\wt\Gamma$ to $\wt\Lambda$ along the two preimages of the point. Describe the Prym variety of this double cover (cf. Exercise \ref{sec:PrymExercises}.\ref{exer:PrymSurjective} for a similar but  simpler case).

\item Many of the results and definitions described in Section \ref{sec:PrymStructure} assume that divisors are supported on the interior of edges. Can they be extended to any divisors? 

\item Let $G$ be a model for a metric graph $\Gamma$ and let $P$ be a Prym divisor supported on the vertices of $G$. Is $P$ linearly equivalent to a divisor of the form $E-\iota E$ where $E$ is supported on the vertices of $G$? If so, what is the optimal upper bound for the degree of $E$?

\item\label{exer:hyperellipticOpen} In Exercise \ref{sec:PrymExercises}.\ref{exer:PrymHyperelliptic} we exhibited a tropical phenomena that is different from its  algebro-geometric analogue. In all the examples that I can come up with, the target graph $\Gamma$ is not 2-connected. What happens otherwise? That is, suppose  that $\wt\Gamma$ is \emph{not} hyperelliptic, $\Gamma$ is $2$-connected, and that the Abel--Prym map $\Psi_1$ is finite. Is the Abel--Prym map  injective?  

\item More generally, find the degree of the maps 
\[
\Sym^d(\widetilde{\Gamma})\to\Prym_{\varepsilon}(\widetilde{\Gamma}/\Gamma),
\]
where $\varepsilon$ is either 0 or 1 as appropriate. Note that for $d< g-1$ the answer may depend on the graph (one case of that already occurs in  the previous question).

\item Theorem \ref{thm:degAP} shows that every Prym divisor has $2^{g-1}$ anti-symmetric representatives, counted with appropriate multiplicities, but does not offer a way of finding them. Describe an algorithm for finding all the representatives.

\item  Is there a Prym-theoretic version of reduced divisors or Dhar's burning algorithm?

\item Work out the analogous story (to everything that we have done in these notes) for triple covers and beyond. See \cite{LenUlirschZakharov_AbelianCovers} for a classification of abelian covers of any degree and \cite{NaranjoOrtegaSpelta_cyclicPrym} for a recent treatment of Prym varieties of cyclic covers of algebraic curves.

\end{enumerate}

\newpage

\bibliographystyle{alpha}
\bibliography{PrymBib}

\end{document}